\newcommand{\region}[1]{\ensuremath{\mathbf{#1}}}
\newtheorem{theorem}{Theorem}
\newtheorem{lemma}{Lemma}
\begin{document}
\title{The limit of $L_p$\,Voronoi diagrams as $p\rightarrow0$ is the bounding-box-area Voronoi diagram}
\author{Herman Haverkort and Rolf Klein\\Department of Computer Science, University of Bonn}
\date{14 July 2022}
\maketitle

\begin{abstract}
\noindent
We consider the Voronoi diagram of points in the real plane when the distance between two points $a$ and $b$ is given by $L_p(a-b)$ where
$L_p((x,y))  =  (|x|^p+|y|^p)^{1/p}.$ 
We prove that the Voronoi diagram has a limit as $p$ converges to zero from above or from below: it is the diagram that corresponds to the distance function $L_*((x,y)) = |xy|$.
In this diagram, the bisector of two points in general position consists of a line and two branches of a hyperbola that split the plane into three faces per point. We propose to name $L_*$ as defined above the \emph{geometric $L_0$ distance}.

\smallskip
\noindent\textbf{Keywords:} Voronoi diagram, hyperbola, $L_p$ norm, geometric $L_0$ distance.
\end{abstract}

\section{Introduction}\label{intro-sect}
The $L_p$ distance between two points $a$ and $b$ is given by $L_p(a-b)$ where\begin{equation}
L_p((x,y))  = (|x|^p+|y|^p)^{1/p}.\label{Lpdef}
\end{equation}
The $L_p$ distances are widely known and used in computational geometry and applications; see Okabe, Boots, Sugihara and Chiu \cite{obsc-stcav-00}
or Klein, Driemel and Haverkort \cite{kdh-ag-22}. 
In this paper we analyse what happens as $p$ tends to zero. We see immediately that the values of the $L_p$ distance function in \ref{Lpdef} differ depending on whether $p$ tends to zero from above or from below: 
If $x \neq 0$ and $y \neq 0$, then $\lim_{p \downarrow 0} L_p((x,y)) = \infty$ whereas $\lim_{p \uparrow 0} L_p((x,y)) = 0$.
Nevertheless, in this paper, we find that, at least for points in general position, the limit of the Voronoi diagram under the $L_p$ distance as $p$ approaches zero is well-defined. In other words, even though, for small values of $|p|$, the $L_p$ and $L_{-p}$ distances have very different values, they induce almost identical Voronoi diagrams. 

To be precise, observe that the terms $|x|^p$ and $|y|^p$ In Equation~\ref{Lpdef} are undefined if $p < 0$ and $x=0$ or $y=0$, respectively. 
We extend the function $L_p$, for $p<0$, 
to a continuous function on all arguments $(x,y)$ by setting its value to zero if $x$ or $y$ is equal to zero. Thus, given two points $a$ and $b$ in the plane, we can calculate and compare their $L_p$ distances to any other point in the plane, for any $p \neq 0$.

Figure \ref{vorosketch-fig} shows an example of the Voronoi diagram of two points under the $L_p$ distance for two values of $p$ very close to zero.
The diagrams have been created with Vorosketch developed by the first author \cite{h-vs-22}.  The figure shows, in particular, the bisector $B_p(a,b)$ of two points $a = (a_x,a_y)$ and $b = (b_x,b_y)$, that is, it shows the set of points in the plane that are equidistant to $a$ and $b$ under the $L_p$ distance.
The bisector $B_p(a,b)$ separates the set $V_p(a,b)$ of points that are closer to $a$ from the set $V_p(b,a)$ of points that are closer to $b$.
In this paper we prove the following result.
\begin{figure} 
\begin{center}
\includegraphics[width=106.67pt]{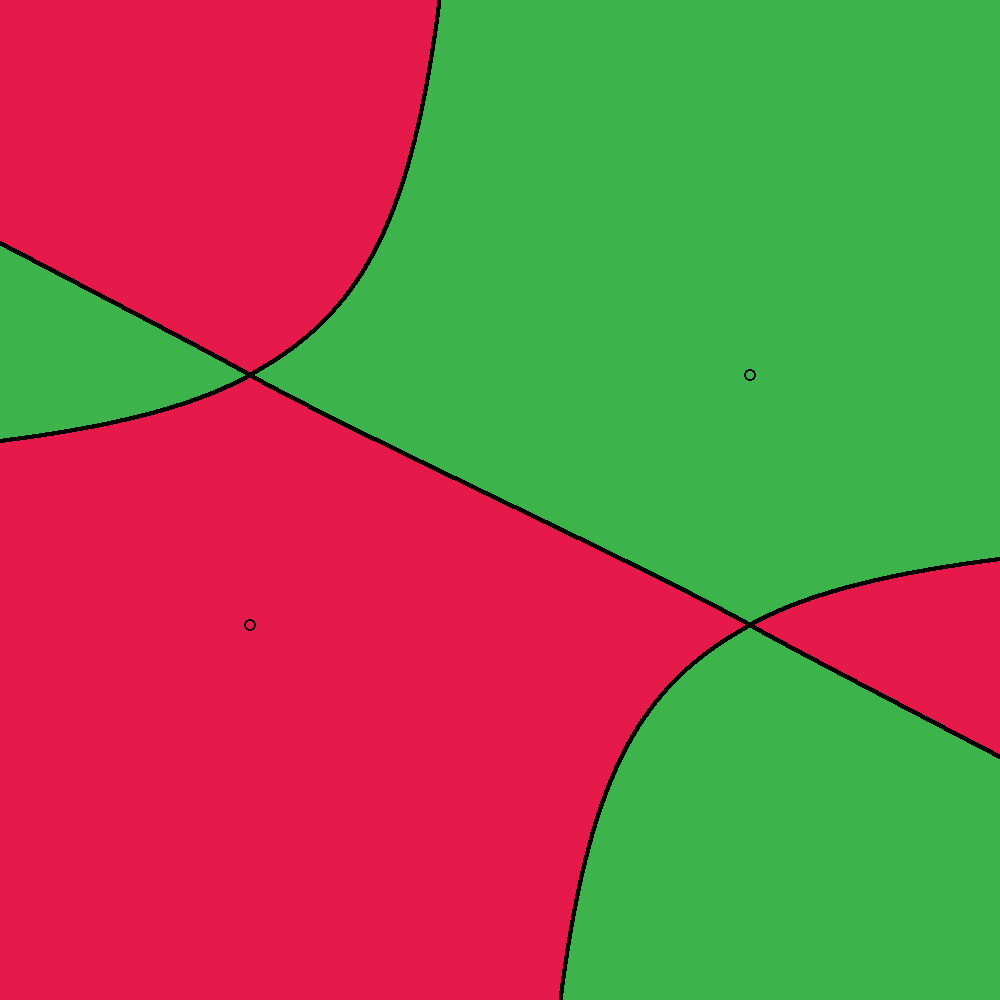}\quad\quad
\includegraphics[width=106.67pt]{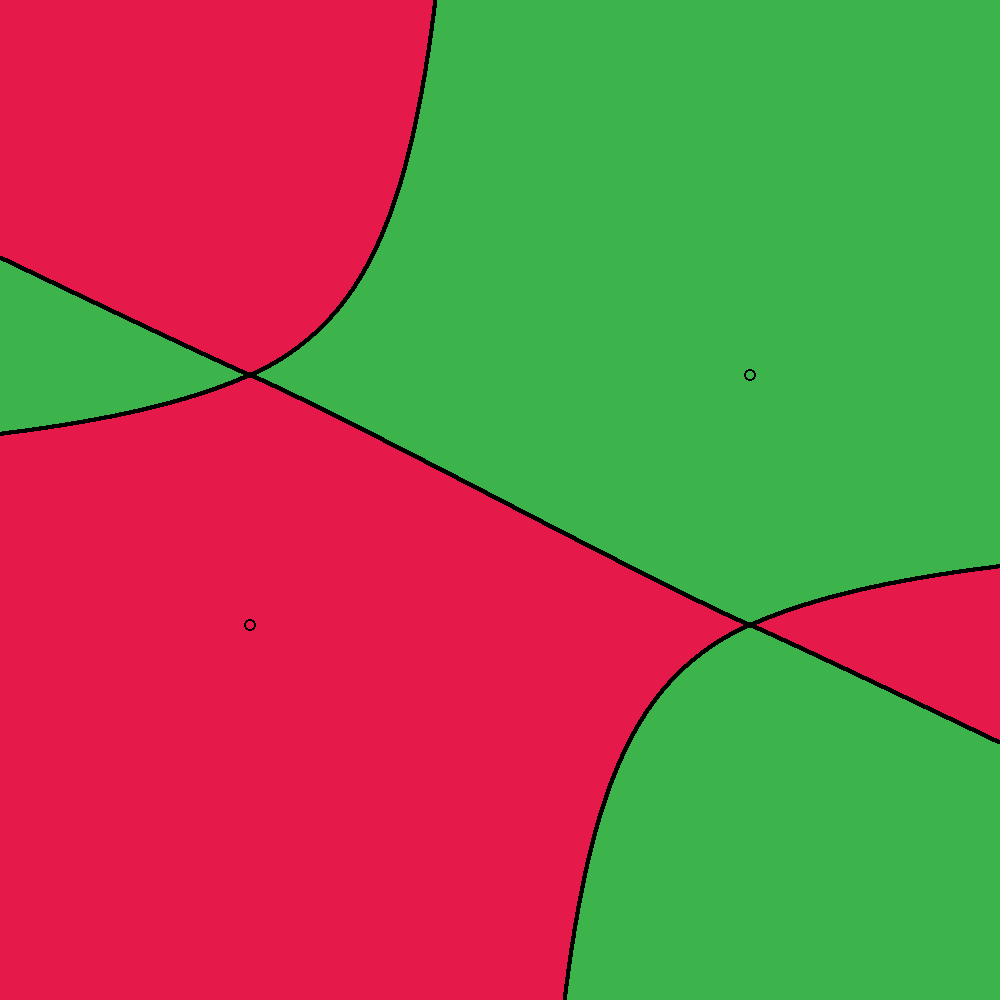}
\caption{A bisector (in black) of two points under the $L_p$ distance for $p = -0.05$ (left) and $p = 0.05$ (right), as computed pixelwise by Vorosketch \cite{h-vs-22}. The bisector divides the plane into two regions (points closer to $a$ and points closer to $b$) that each seem to consist of three faces.}
\label{vorosketch-fig}
\end{center}
\end{figure}
\begin{theorem}\label{main-theo}
Let $a$ and $b$ be two point sites in the real plane with different $X$- and $Y$-coordinates. Then 
their bisector $B_p(a,b)$ and the sets $V_p(a,b)$ and $V_p(b,a)$ 
under the $L_p$ distance converge, as $p$ tends to zero from above or from below, to their bisector $B_*(a,b)$ and the sets $V_*(a,b)$ and $V_*(b,a)$ under the $L_*$ distance as defined by:\begin{equation}
  L_*((x,y)) = |xy|.
  \label{L0def}
\end{equation}
\end{theorem}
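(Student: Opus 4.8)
The plan is to reduce everything to the sign of a single scalar quantity. After a translation I write $q-a=(u,v)$ and $q-b=(s,t)$, so that $q$ lies in $V_p(a,b)$, on $B_p(a,b)$, or in $V_p(b,a)$ according to whether $L_p(q-a)-L_p(q-b)$ is negative, zero, or positive, and likewise for $L_*$. Since $L_p$ and $L_*$ are positive off the coordinate lines, I would track the \emph{log-ratio}
\[
\Delta_p(q) \;=\; \ln L_p(q-a) - \ln L_p(q-b),
\]
whose sign is exactly the sign of $L_p(q-a)-L_p(q-b)$. Showing that, for every fixed $q$ off $B_*(a,b)$, the quantity $\Delta_p(q)$ has the same sign as its $L_*$-analogue for all sufficiently small $|p|$ gives pointwise convergence of the regions; converting this into convergence of $B_p$ and $V_p$ as sets is the remaining, more delicate, task.

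The core is a first-order expansion in $p$. For fixed $w=(w_1,w_2)$ with $w_1,w_2\neq 0$ I would write $|w_i|^p=e^{p\ln|w_i|}$ and expand to get $|w_1|^p+|w_2|^p = 2 + p\ln|w_1w_2| + O(p^2)$, hence
\[
\ln L_p(w) \;=\; \frac{1}{p}\ln\!\left(|w_1|^p+|w_2|^p\right) \;=\; \frac{\ln 2}{p} \;+\; \tfrac12\ln|w_1w_2| \;+\; O(p).
\]
The reason for working with $\ln L_p$ rather than $L_p^{\,p}$ is that the divergent term $\tfrac{\ln 2}{p}$ is identical for both sites and therefore cancels in $\Delta_p$, leaving
\[
\Delta_p(q) \;=\; \tfrac12\bigl(\ln|uv|-\ln|st|\bigr) + O(p) \;=\; \tfrac12\ln\frac{L_*(q-a)}{L_*(q-b)} + O(p).
\]
Because this expansion is valid as $p\to 0$ from either side, both the limit and, whenever $L_*(q-a)\neq L_*(q-b)$, the sign of $\Delta_p(q)$ for small $|p|$ are independent of the sign of $p$; this is exactly what explains the coincidence of the two limits announced in the introduction. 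I would also record here that $|uv|=|st|$ splits into the locus $uv=st$, which is a line, and $uv=-st$, which is a hyperbola with two axis-parallel branches, reproducing the line-plus-hyperbola structure of $B_*(a,b)$ claimed in the statement.

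The expansion breaks down precisely on the four coordinate lines $x\in\{a_x,b_x\}$ and $y\in\{a_y,b_y\}$, where one of $u,v,s,t$ vanishes and the matching logarithm is $-\infty$; these I would treat by hand. On such a line one of $L_*(q-a),L_*(q-b)$ equals $0$, so $q$ should go to the nearer site, and this can be checked directly: e.g. on $x=a_x$, away from its single crossing with $y=b_y$, one has $L_*(q-a)=0<L_*(q-b)$, while for small $p$ the value $L_p(q-a)$ stays bounded and $L_p(q-b)$ is bounded away from it (for $p>0$ because $L_p(q-b)\to\infty$, for $p<0$ because $L_p(q-a)=0$ by the extended definition), so $q\in V_p(a,b)=V_*(a,b)$. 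The only exceptions are the finitely many crossing points such as $(a_x,b_y)$, where both $L_*$-distances vanish and which therefore lie on $B_*$.

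The main obstacle is the final upgrade from pointwise classification to genuine set convergence of $B_p$ and $V_p$, because the remainder $O(p)$ above is not uniform: it blows up as $q$ approaches a coordinate line, which is exactly where $B_*$ meets those lines. On any compact region bounded away from the coordinate lines I expect a clean argument, since the limit function $\tfrac12\ln\bigl(L_*(q-a)/L_*(q-b)\bigr)$ changes sign transversally across $B_*(a,b)$; combining a uniform version of the expansion with an implicit-function and monotonicity argument then traps $B_p$ in an arbitrarily thin tube around $B_*$ once $|p|$ is small. The delicate part is a neighborhood of each point where $B_*$ crosses a coordinate line, where I would instead use monotonicity of $L_p$ along suitable axis-parallel directions to sandwich $B_p$ inside shrinking neighborhoods of $B_*$, and so conclude local Hausdorff convergence of the whole bisector and, with it, of $V_p(a,b)$ and $V_p(b,a)$.
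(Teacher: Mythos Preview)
Your log-ratio expansion is correct and is a genuinely different route from the paper's. The paper never passes to $\ln L_p$; instead it works directly with the bisector equation in the additive form
\[
v_p(x):=|x+u|^p-|x-u|^p \;=\; |y-1|^p-|y+1|^p=:w_p(y),
\]
after normalising to $a=(-u,-1)$, $b=(u,1)$. Its key device is an exact functional identity, $w_p(h(x))=|x|^{-p}v_p(x)$ and $w_p(s(x))=|u|^{-p}v_p(x)$ for $h(x)=-u/x$, $s(x)=-x/u$, which turns the error $w_p(y_p)-w_p(h(x))$ (resp.\ $-w_p(s(x))$) into $(1-|x|^{-p})v_p(x)$. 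The mean value theorem on $w_p$ then converts this into a bound on $|y_p-h(x)|$ (resp.\ $|y_p-s(x)|$). What you call the non-uniformity near the coordinate lines the paper handles by a cell-by-cell case analysis: in each grey cell of its grid it proves an explicit a priori bound of the type $y_p<2h(x)+3$ (resp.\ $y_p<2s(x)+3$), which confines the mean-value point and keeps $|z_p|^{-1}$ bounded. Existence of bisector points in each cell is proved separately by comparing the ranges of $v_p$ and $w_p$, and the boundary lines $x\in\{-u,0,u\}$, $y\in\{-1,0,1\}$ are treated one at a time. Your approach buys conceptual clarity---the cancellation of the $\ln 2/p$ term explains at a glance why the $p\downarrow 0$ and $p\uparrow 0$ limits agree---and gives the pointwise classification of $V_p$ in one stroke, whereas the paper's approach buys explicit quantitative control of $y_p$ in every cell, which is exactly the ``uniformity near coordinate lines'' work you flagged as the remaining obstacle; the paper does that work rather than sketching it.
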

Note that $L_*(a-b)$ is simply the area of the axis-parallel bounding box of $a$ and $b$. Let $\lambda$ and $\rho$ denote the other two vertices of their axis-parallel bounding box. 
\begin{figure}
\begin{center}
\includegraphics[width=\hsize]{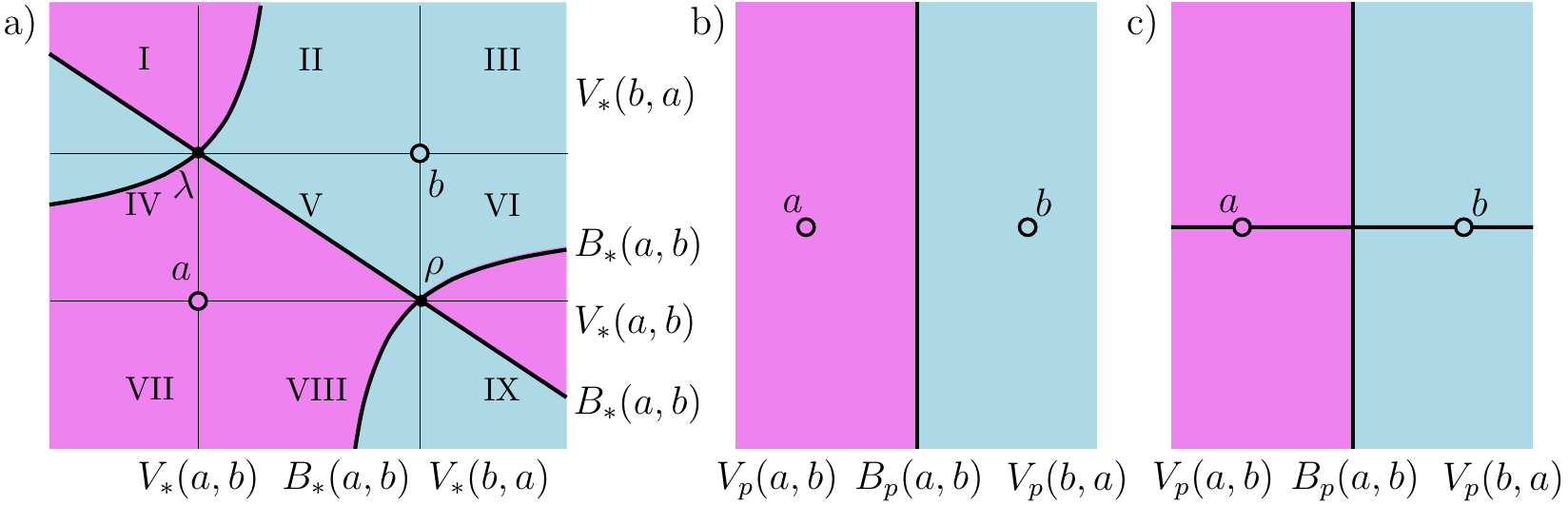}
\caption{(a) A bisector of two points $a$ and $b$ in $L_*$. (b) A bisector of two points on a horizontal line in $L_p$, where $p > 0$. (c) A bisector of two points on a horizontal line in $L_p$, where $p < 0$: note that the line through $a$ and $b$ does not lie on the common boundary of the regions $V_p(a,b)$ and $V_p(b,a)$, but it is part of the bisector $B_p(a,b)$ nonetheless, as all points on the line through $a$ and $b$ have distance 0 to both $a$ and $b$.}
\label{Lstarbisec-fig}
\end{center}
\end{figure}
The bisector $B_*(a,b)$ consists of the line through $\lambda$ and $\rho$
and of two hyperbola branches through $\lambda$ and $\rho$, respectively, whose asymptotes are the vertical and horizontal lines through the bounding box's centre; see Figure~\ref{Lstarbisec-fig}(a). This can easily be verified by solving the equation $L_*(a-q) = L_*(b-q)$ for $q$ in each of the nine regions that result from subdividing the plane by the axis-parallel lines through $a$ and $b$: in the odd-numbered regions, the solution is the line with equation:\begin{equation}
(a_y-b_y)x + (a_x-b_x)y = a_x a_y - b_x b_y,\label{linedef}
\end{equation}
whereas in the even-numbered regions, the solution is the hyperbola with equation:\begin{equation}
\left(x - \frac12(a_x+b_x)\right)\left(y - \frac12(a_y+b_y)\right) = -\frac14(a_x-b_x)(a_y-b_y).\label{hyperboladef}
\end{equation}
The bisector $B_*(a,b)$ divides the plane into six faces, such that each face is entirely contained in either $V_*(a,b)$ or $V_*(b,a)$, the face containing $a$ lies in $V_*(a,b)$, the face containing $b$ lies in $V_*(b,a)$, and each point of $B_*(a,b)$ lies on the boundary of both $V_*(a,b)$ and $V_*(b,a)$.

It follows that for point sites in general position (that is, if no two points are on a common horizontal or vertical line), the limit of their $L_p$\,Voronoi diagram as $p$ tends to zero is well-defined and it equals the $L_*$\,Voronoi diagram. 
Thus, it appears that defining $L_0((x,y))$ as $L_*((x,y)) = |xy|$, which equals $\exp(\ln |x| + \ln |y|)$ if $x, y \neq 0$, constitutes a natural interpretation of $L_p((x,y)) = (|x|^p + |y|^p)^{1/p}$ for $p = 0$. Therefore we propose to call the $L_*$ distance measure the \emph{geometric $L_0$ distance},
and the resulting Voronoi diagram the {\em geometric $L_0$\,Voronoi diagram}, %
distinguishing 
it from other, and unrelated, definitions of $L_0$ \cite{wiki22}. 
The natural generalisation to higher dimensions would be to define $L_0(x)$, for a $d$-dimensional distance vector $x$, by $L_0(x) := |\prod_{i=1}^d x_i|$. 
Figure~\ref{8points-fig} shows an example of an $L_0$\,Voronoi diagram for eight points in the plane.
For $n$ points in general position, the complexity of this Voronoi diagram is in $\Omega(n^2)$,
as Seidel has observed\footnote{\label{s-cloVD-22}Personal communication by Raimund Seidel}; see Section \ref{circ-sect}. Figure~\ref{figallp} shows, for comparison, the $L_p$\,Voronoi diagram of the same sites for several values of~$p$.

\begin{figure}
\begin{center}
\includegraphics[width=240pt]{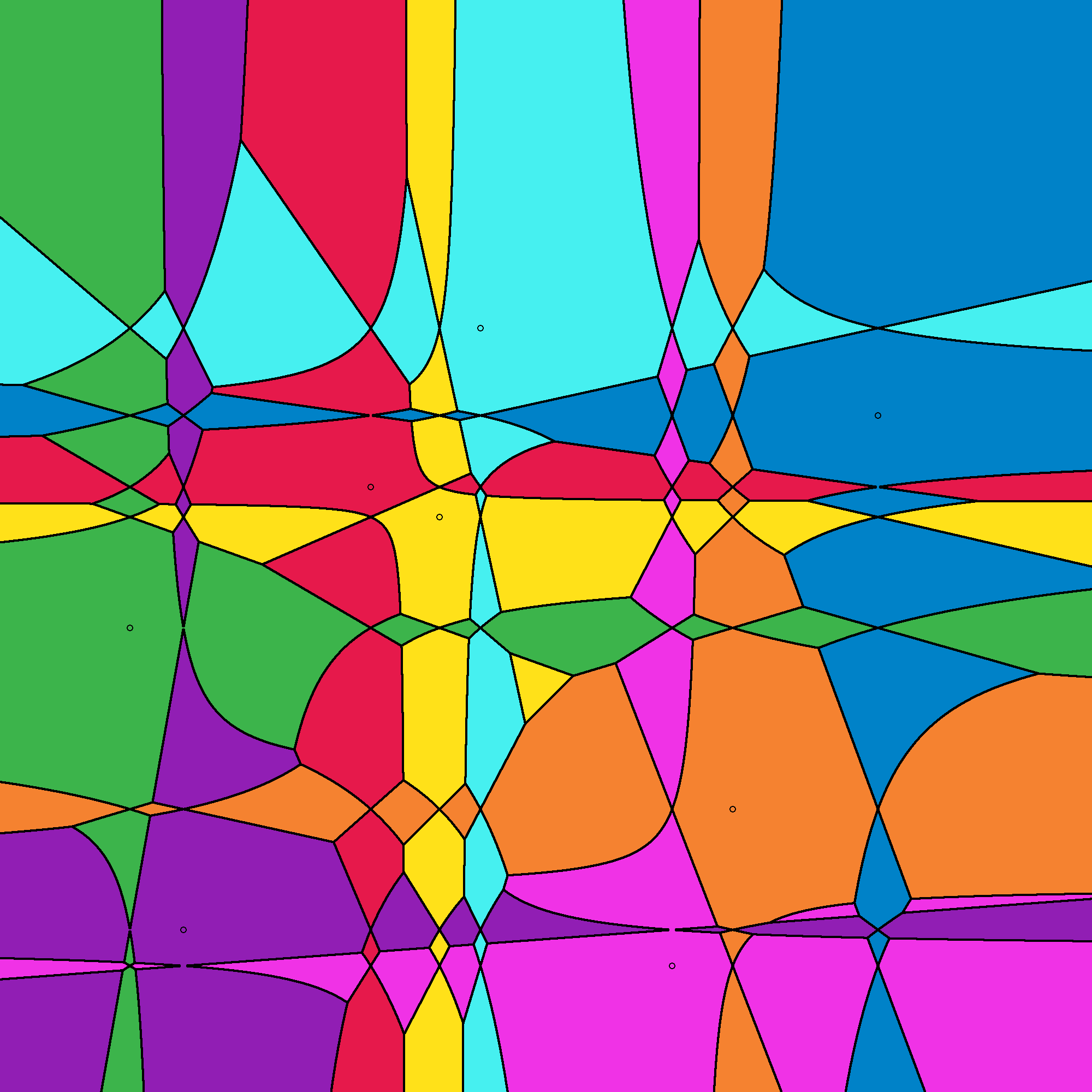}
\caption{An $L_0$\,Voronoi diagram as computed pixelwise by Vorosketch \cite{h-vs-22}.}
\label{8points-fig}
\end{center}
\end{figure}

\begin{figure}
\newcommand\VDp[2]{%
\vbox{\hsize=0.32\hsize\hbox{\includegraphics[width=\hsize]{8pointsL#2}}\hbox{$p=#1$}}%
}
\begin{center}
\leavevmode
\vbox{\hsize=0.85\hsize
\hbox to\hsize{%
\VDp{-\infty}{minf}\hfill
\VDp{-1}{m1}\hfill
\VDp{-0.2}{m02}}
\kern0.04\hsize
\hbox to\hsize{%
\VDp{-0.1}{m01}\hfill
\VDp{0}{0}\hfill
\VDp{0.1}{01}}
\kern0.04\hsize
\hbox to\hsize{%
\VDp{0.2}{02}\hfill
\VDp{0.5}{05}\hfill
\VDp{0.7}{07}}
\kern0.04\hsize
\hbox to\hsize{%
\VDp{1}{1}\hfill
\VDp{2}{2}\hfill
\VDp{\infty}{inf}}}
\caption{Voronoi Diagrams under the $L_p$ distance for various values of $p$, as rendered pixel by pixel by Vorosketch~\cite{h-vs-22}. Note that features narrower than a pixel might not have been detected completely, so the combinatorial structure of the bisectors should be interpreted with care.}
\label{figallp}
\end{center}
\end{figure}

Note that $L_0(q) = \frac12 (L_1^2(q) - L_2^2(q))$, so the geometric $L_0$ distance measures, in a way, the difference between the $L_1$ and the $L_2$ distance.
Also, $\ln(L_0((x,y)))=
\ln |x| + \ln |y|$ could, perhaps, be considered the proper distance measure in Manhattan if cabs were accelerating exponentially fast.

If $a$ and $b$ lie on a common vertical or horizontal line, then $B_p(a,b)$ is discontinuous at $p = 0$,
as $\lim_{p\downarrow 0} B_p(a,b)$ is the line perpendicular to and through the midpoint of the segment $ab$, whereas
$\lim_{p\uparrow 0} B_p(a,b) = B_*(a,b) $ consists of both axis-parallel lines through the midpoint of $ab$ (see Figure~\ref{Lstarbisec-fig}(c)).

The proof of Theorem \ref{main-theo} is based on Lemma \ref{cell-lem} in Section~\ref{circ-sect}, 
which tells us in which parts of the plane $L_p$ bisectors appear, and on Lemma \ref{conv-theo} in Section~\ref{ana-sect}, 
which states that these $L_p$ bisector points converge to a line and a hyperbola.

\section{$L_p$ circles}\label{circ-sect}
First we want to show how a bisector structure as shown in Figure \ref{vorosketch-fig}
can come about. Metrical bisectors are written out by intersection points of expanding 
$L_p$ circles, so we take a look at those.

For each $p > 0$ and each radius $r > 0$, the $L_p$ circle of radius $r$ centred at the origin has \emph{corner points} $(0,\pm r)$ and $(\pm r, 0)$, since $L_p(0, \pm r) = (|0|^p + |r|^p)^{1/p} = r$ and $L_p(\pm r, 0) = (|r|^p + |0|^p)^{1/p} = r$.

As $p$ decreases from $2$ to $1$, the round Euclidean $L_2$ circle gets flattened into a diagonal
square that is just convex. As $p$ becomes smaller than $1$, the circle's segments between the
cornerpoints bend inwards; since convexity is lost, 
the triangle inequality no longer holds.
\begin{figure} 
   \begin{minipage}{.45\linewidth} 
      \includegraphics[width=\linewidth]{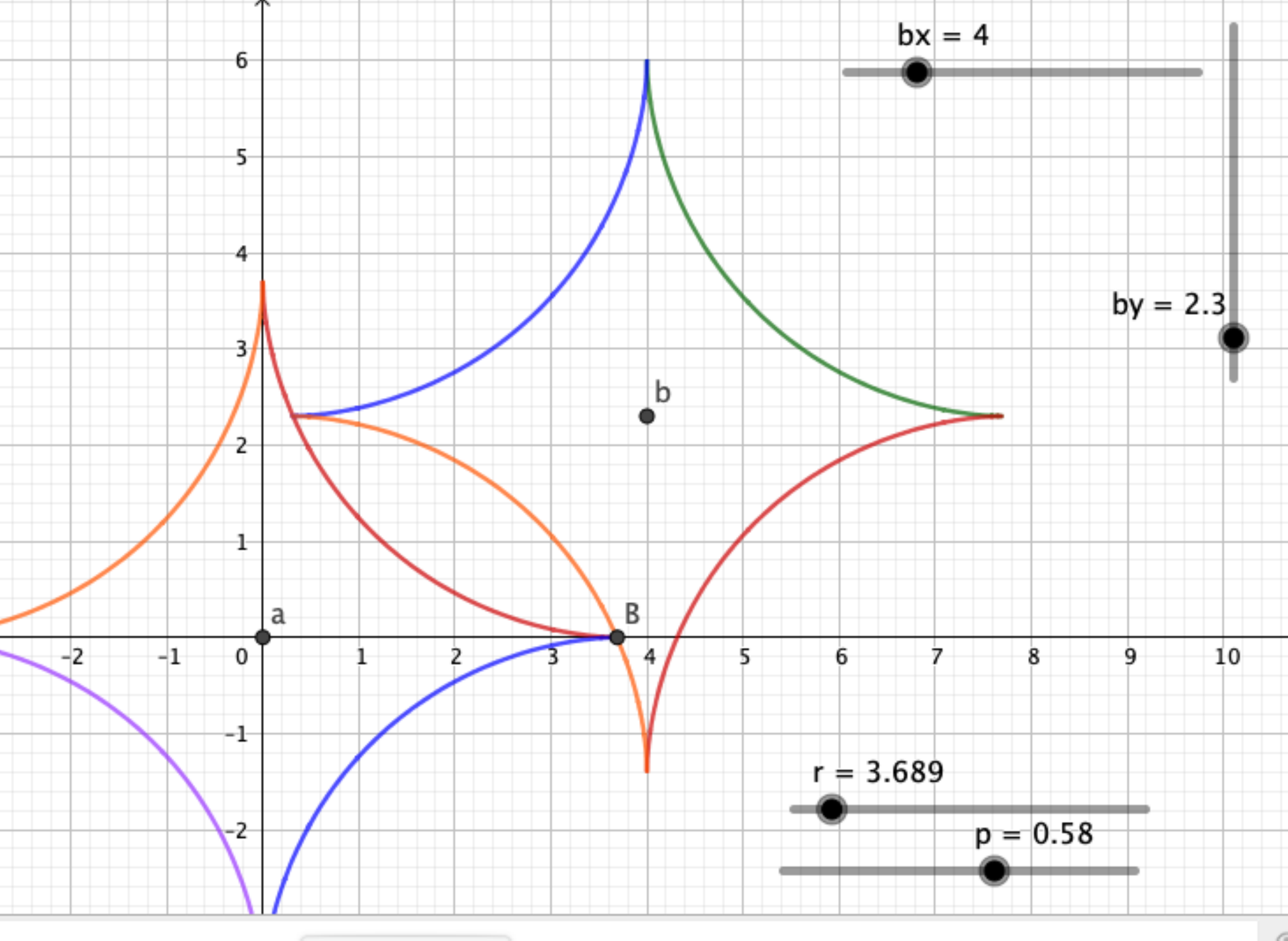}
      \caption{Two $L_{0.58}$ circles emanating from centres $a$ and $b$.}
      \label{lowp-fig}
   \end{minipage}
   \hspace{.1\linewidth}
   \begin{minipage}{.45\linewidth}
      \includegraphics[width=\linewidth]{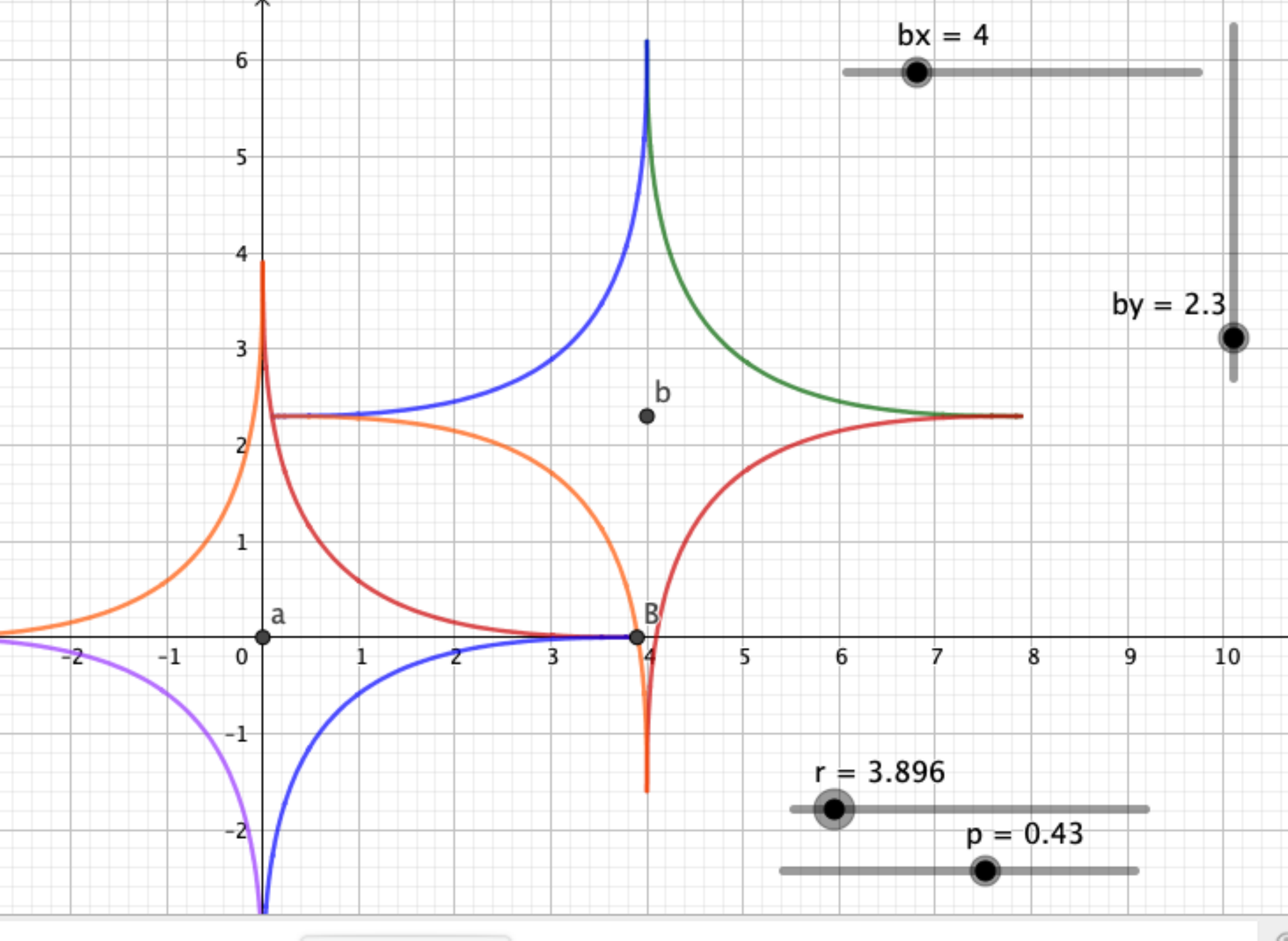}
      \caption{Bisector point $B$ moves to the right as $p>0$ shrinks.}
      \label{lowerp-fig}
   \end{minipage}
\end{figure}

Figure \ref{lowp-fig} shows two $L_{0.58}$ circles emanating from centres $a$ and $b$. At radius $r=3.689$ they meet
for the first time, creating two bisector points on the top and bottom edges of the bounding box of $a,b$.
Let us focus on the bisector point $B$
on the bottom edge of the bounding box.
As $p$ shrinks to $0.43$, point $B$ moves to the right along the bottom edge of the bounding box, as Figure \ref{lowerp-fig} shows.
Only in the limit, as $p\rightarrow0$, will $B$ reach the rightmost endpoint of the bottom edge,
denoted by $\rho$ in Figure \ref{l0bisec-fig}.
\begin{figure}
   \begin{minipage}{.45\linewidth} 
      \includegraphics[width=\linewidth]{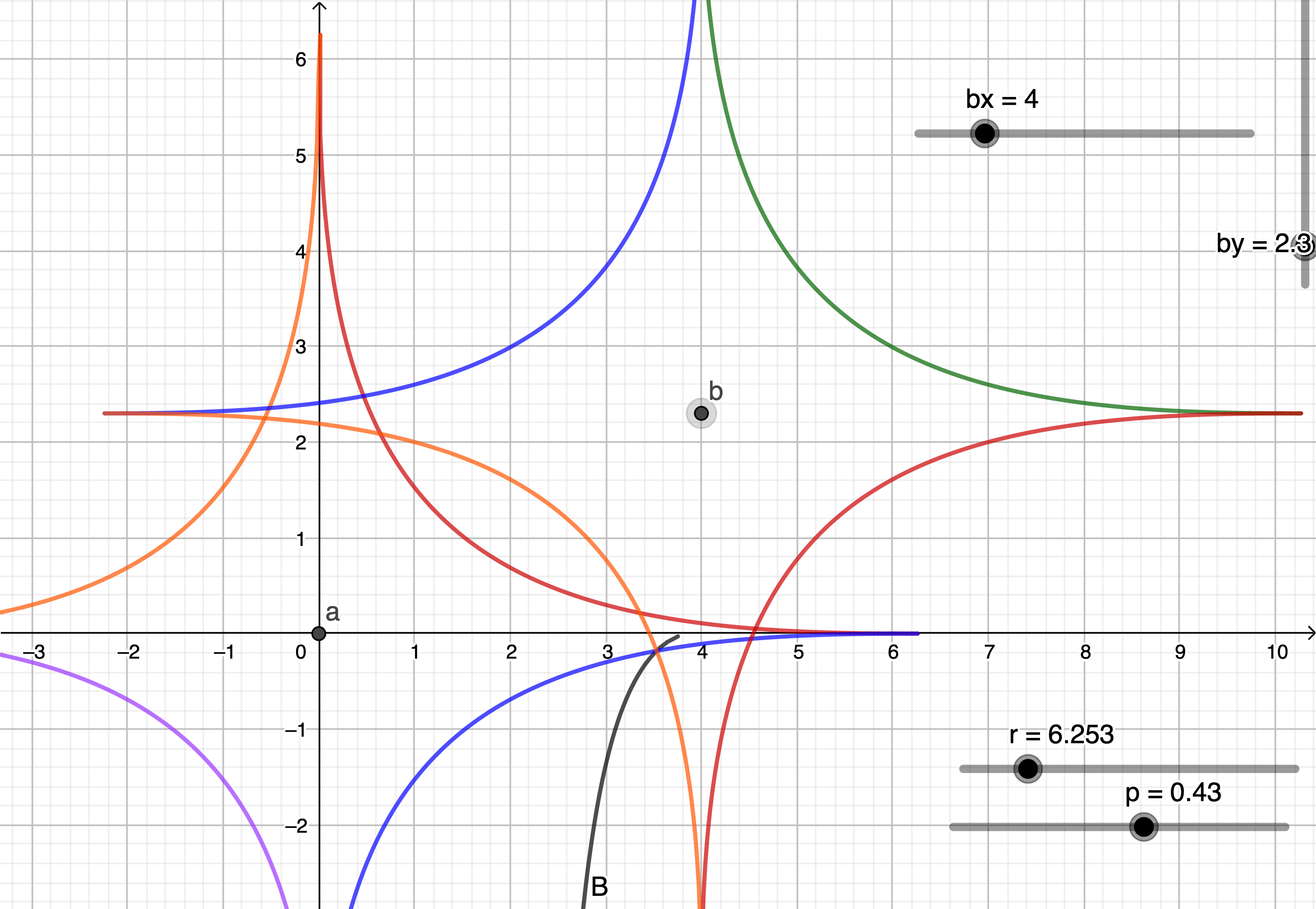}
      \caption{Two $L_{0.43}$ circles expanding further...}
      \label{expa-fig}
   \end{minipage}
   \hspace{.1\linewidth}
   \begin{minipage}{.45\linewidth}]
      \includegraphics[width=\linewidth]{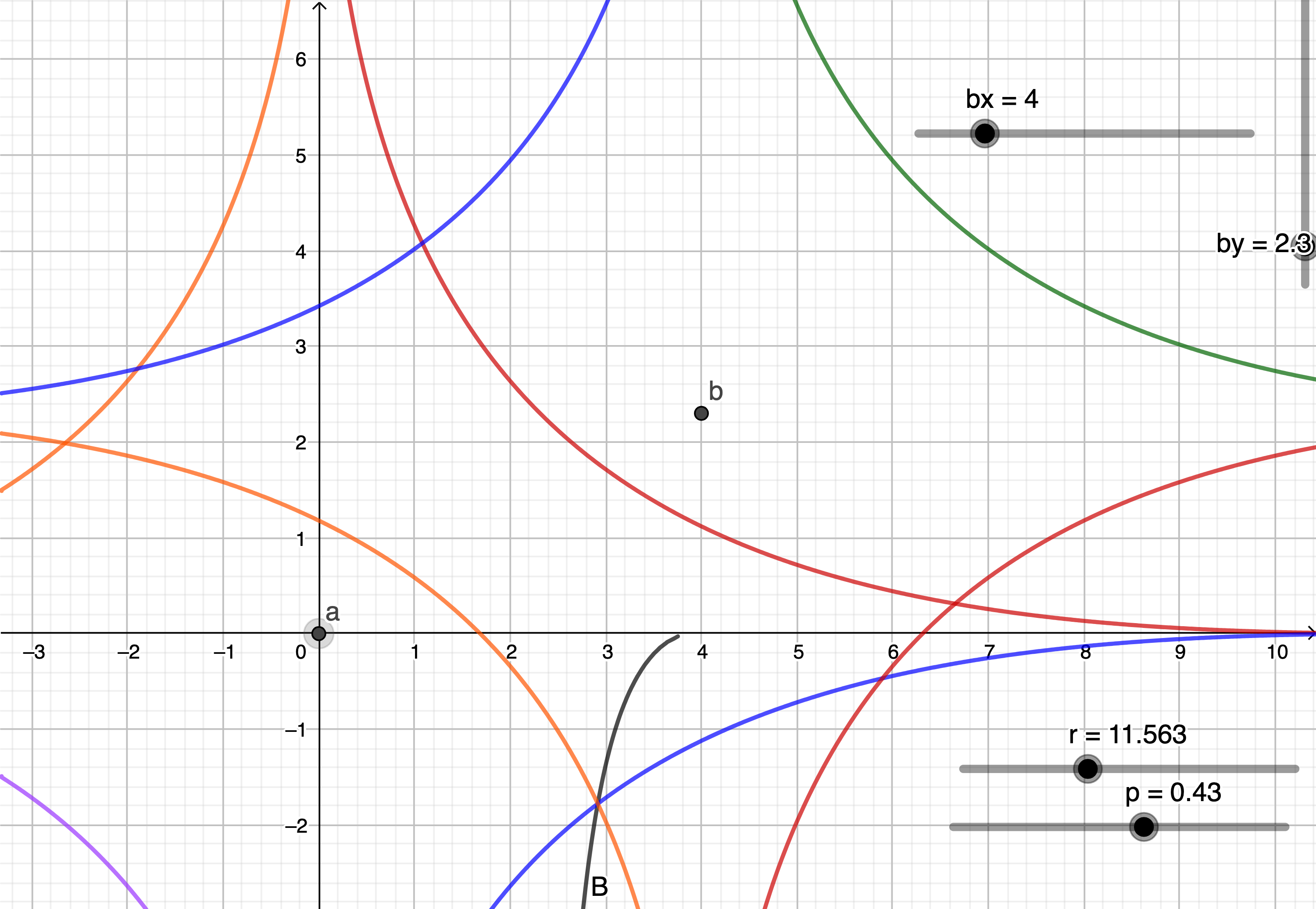}
      \caption{...while their intersections are writing out bisector segments.}
      \label{expamore-fig}
   \end{minipage}
\end{figure}

If we fix $p=0.43$ as in Figure \ref{lowerp-fig} and increase radius $r$ we can see in Figures \ref{expa-fig}   and 
\ref{expamore-fig} how bisector point $B$ moves to the left and downwards. In fact, $B$ and the other intersection points of the $L_p$ circles are writing out segments of $B_p(a,b)$ that will get closer to the curves shown in Figure \ref{Lstarbisec-fig}(a) the smaller 
$p$ becomes.

In the limit, the $L_p$ unit circle, for $p>0$, converges to the cross that consists of the four cornerpoints and the two line segments connecting the horizontal and vertical pairs.

\smallskip
For $p<0$, the $L_p$ circles of radius $r$ centred at the origin are disconnected. They consist of  four connected segments that have asymptotes a distance $r$ away from, and parallel to, the coordinate axes.
Indeed, to fulfil a  circle's equation
\[
    (|x|^p + |y|^p)^{\frac{1}{p}} =r
\]
neither $x$ nor $y$ can be zero because $0^p$ is undefined if $p<0$.
Therefore,we must have $|x| > r$; otherwise, that is, if $|x| \leq r$, we would have $|x|^p \geq r^p$ (since the function $f(z)=z^p$ is decreasing for $p<0$ and $z>0$) and we would obtain the contradiction 
$r^p = |x|^p+|y|^p > |x|^p \geq r^p$ .
If we fix $r$ and let $p$ grow to zero from below, the circle's segments strive to infinity along their asymptotes, so that 
no limits of these circles exist in the plane.

Yet the intersections of $L_p$ circles for negative $p$ define bisector curves that look quite similar to those for $p>0$; see 
Figures \ref{expaneg-fig} and \ref{expanegmore-fig}. Also, the analysis of the bisector curves for positive and negative values of $p$ in Section~\ref{ana-sect} will turn out to be almost identical.

Due to the shape of the $L_p$ circles, $L_p$\,Voronoi diagrams of $n$ points have complexity $\Omega(n^2)$ because circles
can penetrate each other in a grid-like fashion, as Seidel 
observed\rlap{$^{\ref{s-cloVD-22}}$}.
\begin{figure}
   \begin{minipage}{.45\linewidth} 
      \includegraphics[width=\linewidth]{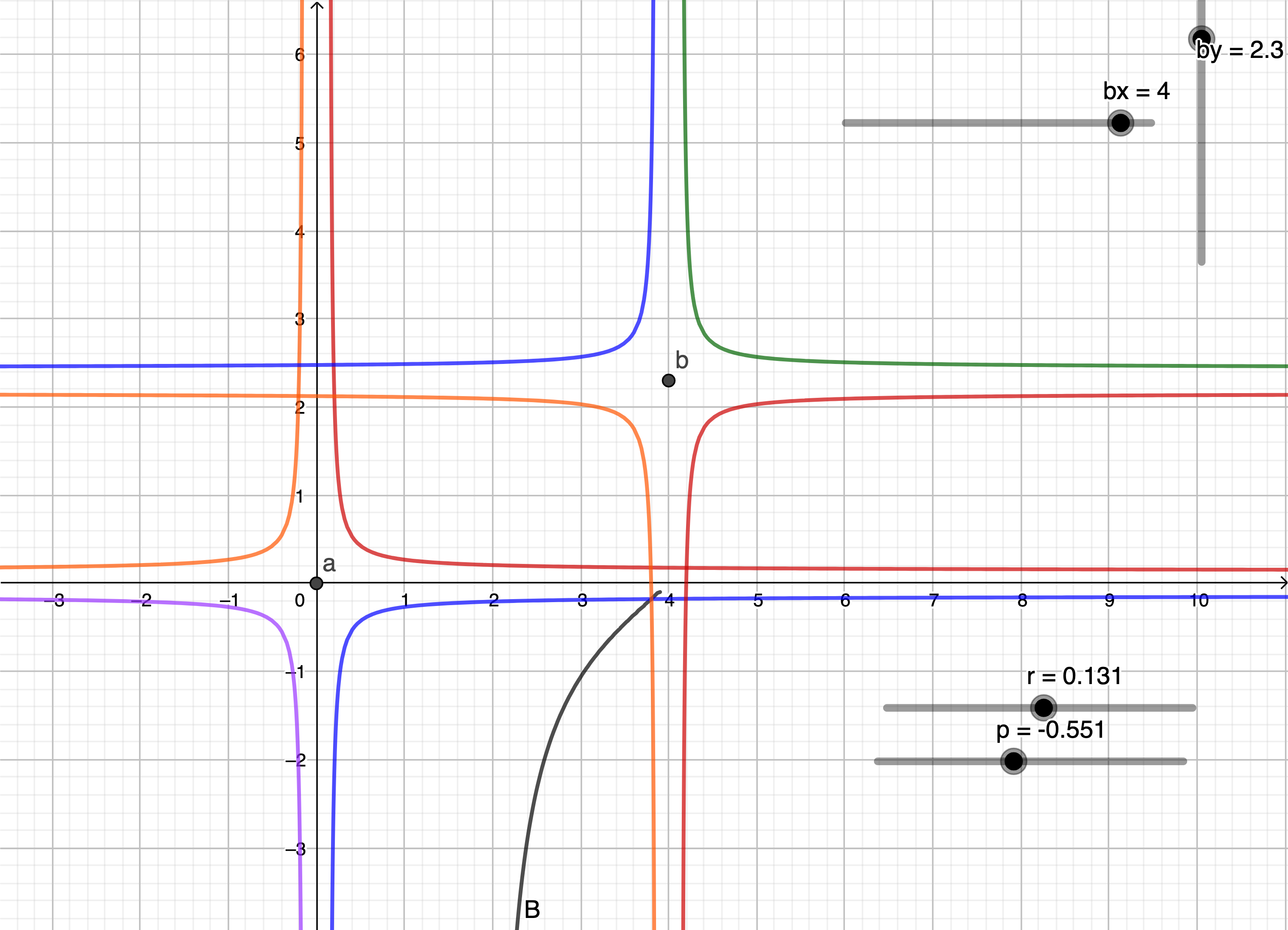}
      \caption{Two $L_{-0.55}$ circles centred at $a$ and $b$.}
      \label{expaneg-fig}
   \end{minipage}
   \hspace{.1\linewidth}
   \begin{minipage}{.45\linewidth}]
      \includegraphics[width=\linewidth]{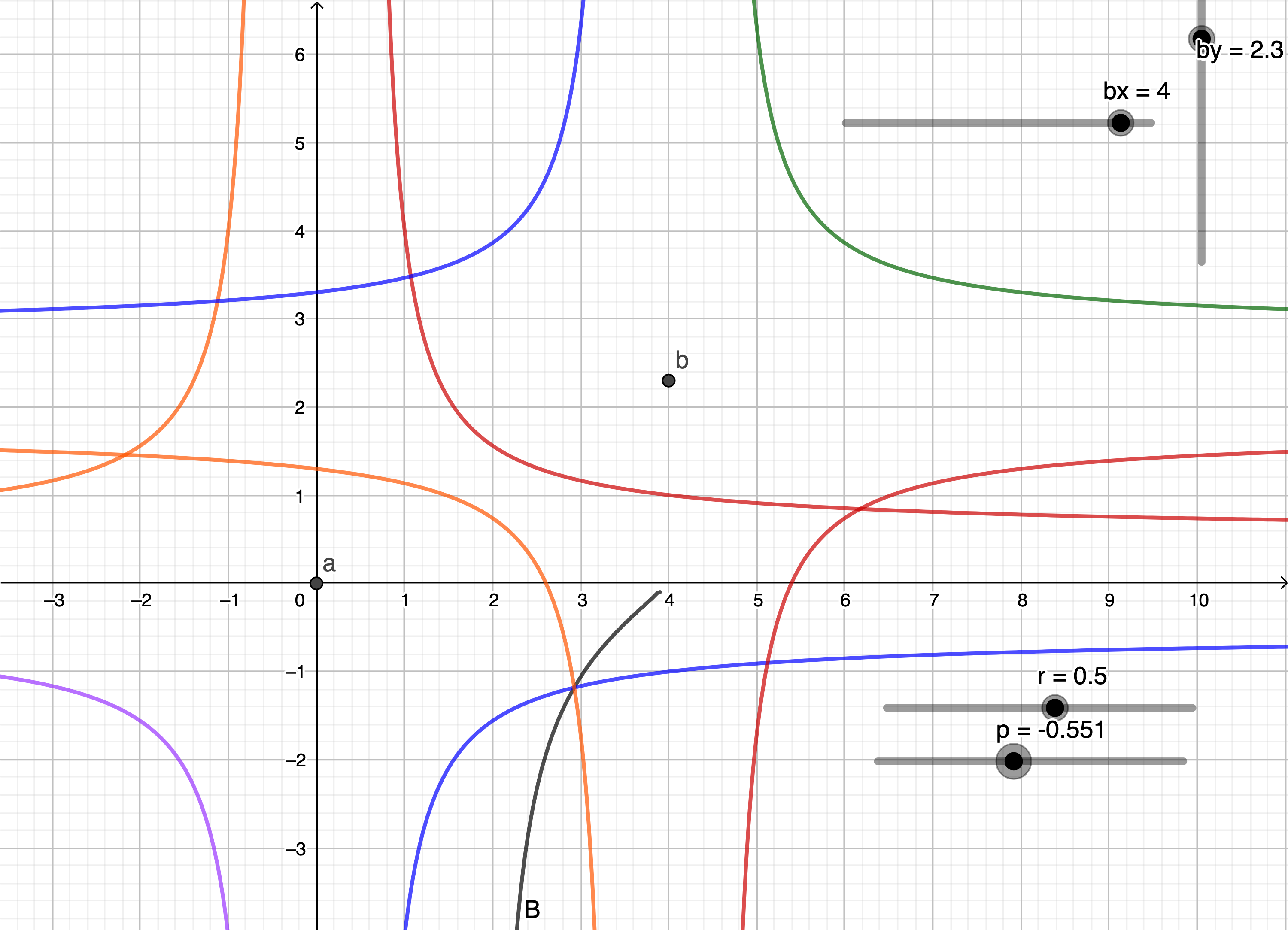}
      \caption{Bisectors look similar to the case $p>0$.}
      \label{expanegmore-fig}
   \end{minipage}
\end{figure}

It is quite interesting to study the intersection patterns of expanding $L_p$ circles 
by means of an interactive tool like GeoGebra\footnote{Our GeoGebra worksheets can be downloaded from \url{http://herman.haverkort.net/L0-Distance/}.}. 
It leads to the following observation, whose proof will be given in Section~\ref{existence-sec}, after introducing necessary notations.
\begin{figure}
\begin{center}
\includegraphics[scale=0.6]{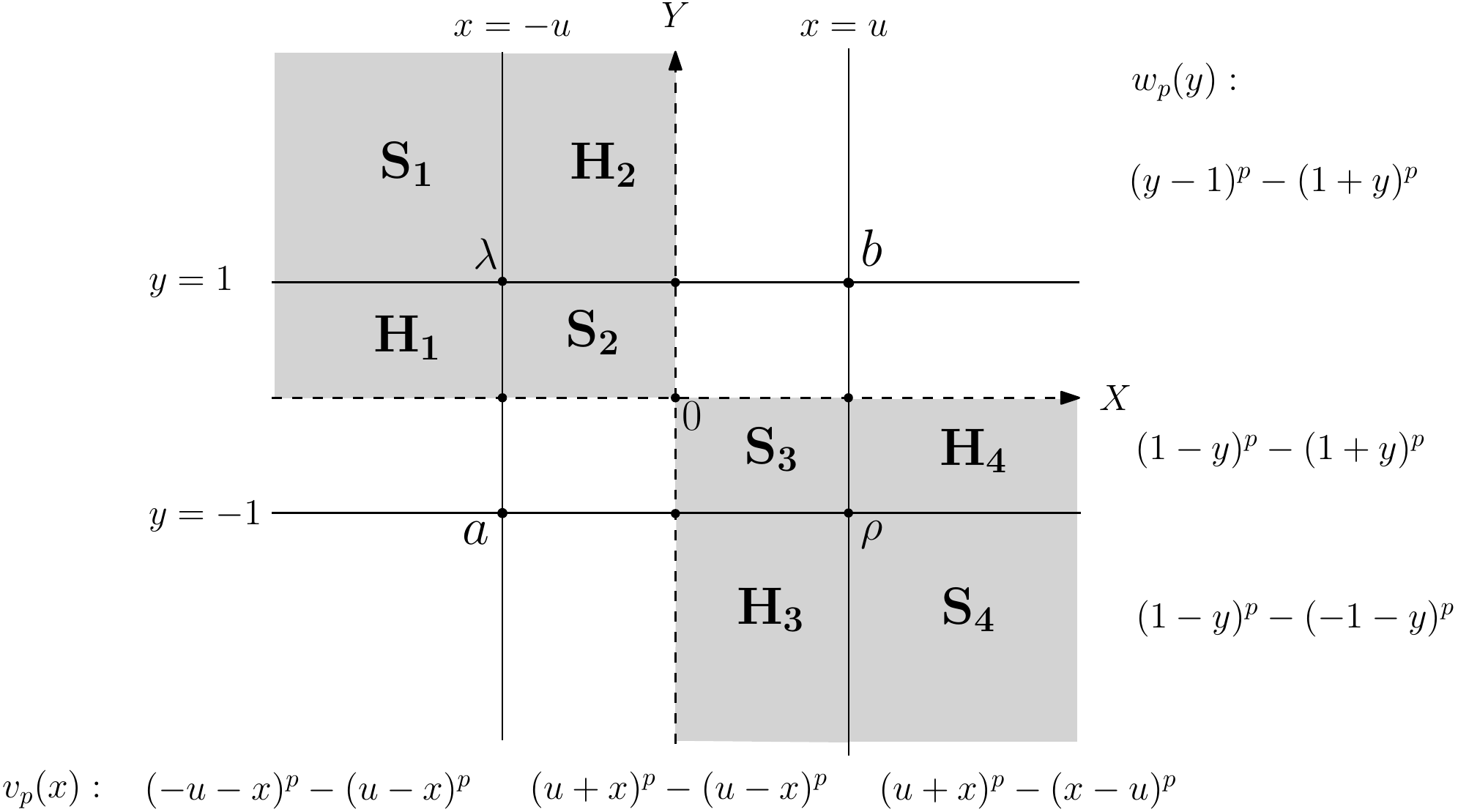}
\caption{The cells in the plane where segments of $B_p(a,b)$ appear.
Notations $H$ and $S$ denote cells that contain hyperbola and straight line segments, respectively. Also shown are the functions $v_p(x)$ and $w_p(y)$ for the various columns and rows of the grid.}
\label{cell-fig}
\end{center}
\end{figure}
\begin{lemma}\label{cell-lem}
If $x$ lies in the open $X$-interval of a cell $C$ coloured grey in Figure \ref{cell-fig} then, for each $p$ close enough
to zero, there exists $y_p$ in the open $Y$-interval of $C$ such that $(x,y_p)$ belongs to $B_p( a,b)$.
White cells do not contain points of any $B_p( a,b)$.
\end{lemma}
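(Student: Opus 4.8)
The plan is to set up coordinates and reduce the statement to a study of the bisector equation in each of the nine regions into which the axis-parallel lines through $a$ and $b$ subdivide the plane. Without loss of generality I place the bounding box so that $a=(0,0)$ and $b=(s,t)$ with $s,t>0$ (the general case follows by reflection and translation, since $L_p$ is symmetric in the signs of its coordinates). The columns of the grid are then the $X$-intervals $(-\infty,0)$, $(0,s)$, $(s,\infty)$, and similarly for the rows; this gives nine cells, matching Figure~\ref{cell-fig}. For a point $q=(x,y)$ the bisector condition $L_p(a-q)=L_p(b-q)$ becomes, after raising to the $p$-th power, the equation $|x|^p+|y|^p = |x-s|^p + |y-t|^p$ for $p>0$, and the analogous equation with the sign-adjusted convention for $p<0$. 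The idea is to fix $x$ in the open $X$-interval of a given cell and show that the single-variable function $g_p(y) := \bigl(|x|^p+|y|^p\bigr) - \bigl(|x-s|^p+|y-t|^p\bigr)$ has, respectively, a root or no root in the open $Y$-interval of that cell, for all $p$ close enough to $0$.

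First I would dispose of the white cells. A cell is white precisely when the comparison of $|x|$ versus $|x-s|$ and of $|y|$ versus $|y-t|$ both point the same way — for instance in the cell with $x<0$, $y<0$ we have $|x|<|x-s|$ and $|y|<|y-t|$, so $g_p(y)<0$ for every such $y$ and every $p>0$ (with the inequalities reversed but the conclusion unchanged for $p<0$, using that $z\mapsto z^p$ is decreasing); hence no bisector point can lie there. This monotone-in-both-coordinates argument handles the four corner white cells and the central white cell uniformly, so those cells contain no point of $B_p(a,b)$ for any $p\neq 0$.

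For the grey cells I would use an intermediate-value argument in $y$. Fix $x$ in the open $X$-interval of a grey cell; I need to exhibit the sign change of $g_p$ across the open $Y$-interval. The functions $v_p(x)=|x|^p-|x-s|^p$ and $w_p(y)=|y|^p-|y-t|^p$ drawn in Figure~\ref{cell-fig} are exactly the two halves of $g_p$, since $g_p(y)=v_p(x)+w_p(y)$; so the task is to show that as $y$ ranges over the cell's $Y$-interval, $w_p(y)$ sweeps through an interval that contains $-v_p(x)$. For the grey cells carrying a straight-line segment (the odd-numbered regions) the relevant $Y$-interval is one where $w_p$ is monotone and unbounded on the appropriate side, so the sign change is immediate for every $p$; for the grey cells carrying a hyperbola segment (the even-numbered regions) I would track the endpoint values $\lim w_p$ as $y$ approaches the two ends of the interval and show that for $|p|$ small enough these straddle $-v_p(x)$. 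Continuity of $w_p$ in $y$ then yields the desired $y_p$, and because $g_p(\cdot)$ is strictly monotone in $y$ on each such interval the root is in fact unique, which is convenient even though the lemma only asserts existence.

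The main obstacle I anticipate is the even-numbered (hyperbola) cells near the boundary of the grey region: there the existence of $y_p$ depends on a quantitative comparison of $v_p(x)$ and the limiting values of $w_p(y)$, and these quantities degenerate as $p\to 0$ (recall $|x|^p\to 1$ for $x\neq 0$ when $p\to 0^+$, so both $v_p$ and $w_p$ tend pointwise to $0$ and the sign change becomes delicate). The resolution I would pursue is to rescale: divide $g_p$ by $p$, or equivalently study $\tfrac1p v_p(x) \to \ln|x|-\ln|x-s|$ and $\tfrac1p w_p(y)\to \ln|y|-\ln|y-t|$ as $p\to 0$, which is precisely why the limiting bisector is governed by $\ln|x|+\ln|y|$, i.e.\ by $L_*$. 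Establishing that this rescaled convergence is uniform on compact subsets of the open cell — so that the sign of $g_p$ for small $|p|$ is controlled by the sign of the limiting logarithmic expression, whose zero set is the line \ref{linedef} and hyperbola \ref{hyperboladef} — is the crux, and it is exactly the bridge to Lemma~\ref{conv-theo}, which I would invoke to pin down where those limiting zeros fall inside each cell.
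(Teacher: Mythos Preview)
Your basic strategy --- fix $x$ and look for a sign change of $g_p(\cdot)$ in $y$ via the intermediate value theorem --- is exactly what the paper does. But the cell structure you set up is not the one in Figure~\ref{cell-fig}, and this derails the argument. Figure~\ref{cell-fig} is a $4\times4$ grid, cut not only by the axis-parallel lines through $a$ and $b$ but also by those through their \emph{midpoint} (the origin, in the paper's normalisation $a=(-u,-1)$, $b=(u,1)$). That finer cut is forced precisely by your own whiteness criterion: the comparison of $|x|$ with $|x-s|$ flips at $x=s/2$, and of $|y|$ with $|y-t|$ at $y=t/2$, so neither is determined on the central cell nor on the four side cells of your $3\times3$ grid. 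Your proposal is in fact internally inconsistent here: you first declare the four corners and the centre to be the white cells, and in the next paragraph treat ``the odd-numbered regions'' --- the same five cells --- as the grey $S$-cells carrying the line. In reality only the two corners containing $a$ and $b$ are entirely white; the opposite corners are the grey cells \region{S_1} and \region{S_4}, and the centre and the four side regions are each split half grey, half white in the $4\times4$ picture.

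Two smaller points. Your claim that $w_p$ is ``unbounded on the appropriate side'' on the $S$-intervals holds only for $p<0$; for $p>0$ the paper computes, for example, $w_p((-\infty,-1))=(0,2^p)$, which is bounded, and existence follows because $v_p(x)<1<2^p$ once $p$ is small enough. And invoking Lemma~\ref{conv-theo} to locate the zero would be circular: that lemma \emph{presupposes} a sequence $(x,y_p)$ of bisector points in the cell, which is exactly what Lemma~\ref{cell-lem} is supposed to furnish. The paper avoids both issues by computing the ranges $w_p((-\infty,-1))$ and $w_p((-1,0))$ directly and checking that $v_p(x)$ falls inside them, treating $p>0$ and $p<0$ separately; no rescaling by $1/p$ or appeal to the limiting logarithmic expression is needed.
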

The reason for restricting ourselves to open intervals are the asymptotes involved,
and also the fact that  points  like the lower right corner of the bounding box of $a,b$
in Figures \ref{lowp-fig} and \ref{lowerp-fig}  are not situated on any $B_p(a,b)$ with $p > 0$; only in the $L_0$ diagram and in the $L_p$ diagrams for $p < 0$ does this point appear as a Voronoi vertex.
Bisector points on the vertical lines at $x \in \{-u, 0, u\}$ 
and on the horizontal lines at $y \in \{-1, 0, 1\}$ will be addressed after the proof of Lemma \ref{cell-lem}.

\section{Analysis}\label{ana-sect}

\subsection{Framework}
Throughout this section we will assume that $|p|<1$ is sufficiently small, but not zero.

We consider two point sites $a$ and $b$ that do not lie on a common horizontal or vertical line. If we subject these points to translation, reflection in a coordinate axis, or reflection in the line $y = x$, the bisector under the $L_p$ distance is subject to the same transformation. Therefore, to investigate the shape of the bisector, it suffices to consider two point sites 
$a=(-u, -1)$ and $b=(u, 1)$ where $u \geq 1$. Note that under these conditions, Equations \ref{linedef} and \ref{hyperboladef} of the linear and the hyperbolic sections of the $L_*$ bisector $B_*(a,b)$ simplify to $y = -x/u$ and $y = -u/x$, respectively.

In this subsection, we set up the general framework to analyse the shape of the bisector in all grey cells of Figure~\ref{cell-fig}. In the next subsections, we analyse the $H$-cells and the $S$-cells, respectively.

The $L_p$ bisector $B_p(a,b)$ of $a$ and $b$ is the locus of all points $(x,y)$ in the plane satisfying
\begin{eqnarray}
    (|x+u|^p + |y+1|^p)^{1/p} &=& (|x-u|^p + |y-1|^p)^{1/p},\nonumber\\
    \llap{\hbox{that is,\quad\quad\ }}|x+u|^p + |y+1|^p\hphantom{)^{1/p}} &=& \hphantom{(}|x-u|^p + |y-1|^p. \label{mitabs}
\end{eqnarray}

\begin{lemma}\label{conv-theo}
Let $(x,y_p)_p$, where $p$ tends to zero, be a sequence of points of $B_p(a,b)$ in one of the cells coloured grey in Figure \ref{cell-fig}.
Let $h$ be the hyperbola $h(x) = -u/x$, and let $s$ be the line $s(x) = -x/u$.
Then $\lim_{p\rightarrow0} y_p = h(x)$ holds in cells of type $H$, and 
$\lim_{p\rightarrow0} y_p = s(x)$ in cells of type $S$.
\end{lemma}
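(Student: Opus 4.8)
The plan is to divide the bisector equation~\ref{mitabs} by $p$ and pass to the limit, exploiting the elementary fact that $\lim_{p\to 0}(t^{p}-1)/p=\ln t$ for every fixed $t>0$. First I would rewrite~\ref{mitabs} in separated form as $v_p(x)=w_p(y)$, where $v_p(x):=|x+u|^{p}-|x-u|^{p}$ and $w_p(y):=|y-1|^{p}-|y+1|^{p}$ are the per-column and per-row functions drawn in Figure~\ref{cell-fig}. Dividing by $p$, the bisector condition becomes $V_p(x)=W_p(y)$ with $V_p:=v_p/p$ and $W_p:=w_p/p$. Because the point $x$ is fixed in the \emph{open} $X$-interval of the cell, it avoids the grid values $\{-u,0,u\}$, so $|x+u|$ and $|x-u|$ are both positive and bounded away from $0$, and hence $V_p(x)\to V_0(x):=\ln|x+u|-\ln|x-u|$. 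Observe that $V_0(x)\neq 0$, since $V_0(x)=0$ would force $|x+u|=|x-u|$, i.e. $x=0$, which a cell interior excludes.

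For the $y$-side I would establish two facts about $W_p$ on the relevant open $Y$-interval $J$ of the cell (one of $(-\infty,-1)$, $(-1,1)$, $(1,\infty)$). The first is \emph{strict monotonicity with a $p$-independent direction}: differentiating shows $W_p'(y)=-\bigl[|1-y|^{p-1}+|1+y|^{p-1}\bigr]$ on $(-1,1)$ and an analogous sign-definite expression on the unbounded intervals, so for every small $p\neq0$ (and, crucially, for both signs of $p$) $W_p$ is a strictly monotone bijection of $J$ onto its image; this already makes the point $y_p$ provided by Lemma~\ref{cell-lem} the \emph{unique} solution of $W_p(y)=V_p(x)$ in $J$. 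The second is \emph{locally uniform convergence}: on every compact subinterval of $J$ one has $W_p\to W_0$ uniformly, where $W_0(y)=\ln|y-1|-\ln|y+1|$, by the uniform version of $(t^{p}-1)/p\to\ln t$ over arguments bounded away from $0$. The limit $W_0$ is itself strictly monotone, and on $(-1,1)$ it is a bijection onto all of $\mathbb{R}$, while on each unbounded interval its range is a half-line.

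With monotonicity and uniform convergence in hand, $y_p\to y^{\ast}$ follows by a standard inverse-function argument, once the limit $y^{\ast}$ is pinned to the interior of $J$. I would do this by first solving $W_0(y^{\ast})=V_0(x)$: such a finite interior $y^{\ast}$ exists because $V_0(x)$ is finite and, being nonzero, lies strictly inside the range of $W_0$ on $J$ (on $(-1,1)$ automatically, and on the unbounded intervals exactly under the cell's greyness condition that Lemma~\ref{cell-lem} records). Then, on a small compact neighbourhood $[y^{\ast}-\delta,y^{\ast}+\delta]\subset J$, strict monotonicity of $W_0$ places $V_0(x)$ strictly between $W_0(y^{\ast}\pm\delta)$; uniform convergence transfers this to $W_p$ for small $p$, trapping the unique solution $y_p$ in the neighbourhood, and letting $\delta\to0$ gives $y_p\to y^{\ast}$. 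Finally, $y^{\ast}$ satisfies $\ln\frac{|x+u|}{|x-u|}=\ln\frac{|y^{\ast}-1|}{|y^{\ast}+1|}$, equivalently $|(x+u)(y^{\ast}+1)|=|(x-u)(y^{\ast}-1)|$, which is precisely $L_*(a-q)=L_*(b-q)$ at $q=(x,y^{\ast})$; a short sign check inside each cell then shows the solution equals $s(x)=-x/u$ in $S$-cells and $h(x)=-u/x$ in $H$-cells, matching the split carried out in the following subsections.

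The main obstacle will be the behaviour near the singular lines $y=\pm1$, where the expansion $(t^{p}-1)/p\to\ln t$ degenerates as an argument tends to $0$ (and where, for $p<0$, the terms $|z|^{p}$ blow up rather than vanish). The whole argument therefore hinges on the ordering: I must first locate $y^{\ast}$ in the open interior of $J$ — using that $V_0(x)$ is finite and nonzero — and only afterwards invoke locally uniform convergence on a fixed compact neighbourhood of $y^{\ast}$, never near the endpoints. A secondary subtlety is verifying that the reasoning is genuinely sign-agnostic in $p$; this is what the computation of $W_p'$ and of $\lim(t^{p}-1)/p$ delivers, since both the monotonicity direction of $W_p$ and the limit function $W_0$ are the same whether $p\uparrow 0$ or $p\downarrow 0$, which is exactly why the two one-sided limits coincide.
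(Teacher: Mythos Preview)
Your argument is correct and takes a genuinely different route from the paper's. The paper also works with $v_p$ and $w_p$ and effectively with the rescaled derivative $z_p(y)=\frac{1}{p}\frac{d}{dy}w_p(y)$, but instead of your inverse-function/trapping argument it applies the mean value theorem directly to bound $|y_p-h(x)|$ (resp.\ $|y_p-s(x)|$) by $|z_p(y^{*})|^{-1}\cdot o(1)$ for some intermediate $y^{*}$ (Lemma~\ref{mainform-lem}), and then spends Sections~\ref{hyp-ssect}--\ref{line-ssect} controlling the factor $|z_p(y^{*})|^{-1}$ cell by cell; on the unbounded rows this requires separate a~priori bounds of the form $y_p<2h(x)+3$ and $y_p<2s(x)+3$ (Lemmas~\ref{lowbo} and~\ref{lowbo2}) to keep $y^{*}$ away from infinity. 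Your approach sidesteps those a~priori bounds entirely: by first locating the interior point $y^{*}$ as the unique solution of $W_0(y^{*})=V_0(x)$ (using that $V_0(x)$ is finite because $x\neq\pm u$ and nonzero because $x\neq 0$, hence lies strictly inside the range of $W_0$ on the relevant $J$), and only then invoking locally uniform convergence on a fixed compact neighbourhood of $y^{*}$, you never need to know in advance where $y_p$ sits. The strict monotonicity of $W_p$ on each of the three $Y$-intervals---with direction independent of the sign of $p$, as you note---is what makes the trapping argument go through and simultaneously explains why the bisector point in each cell is unique. The paper's route buys slightly more explicit quantitative information (concrete upper bounds on $|z_p(y^{*})|^{-1}$ such as $\tfrac12$ in the bounded row), while yours is cleaner, treats all cells uniformly, and makes the appearance of the $L_*$ bisector equation $|(x+u)(y+1)|=|(x-u)(y-1)|$ in the limit completely transparent.
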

Let us define 
\begin{eqnarray*}
     w_p(y) &:=& |y-1|^p - |y+1|^p  \mbox{\ \ and} \\
     v_p(x) &:=& |x+u|^p - |x-u|^p.
\end{eqnarray*}
Then the bisector $B_p(a,b)$ is given by $w_p(y) = v_p(x)$, because of Equation \ref{mitabs}.

How are these equations related to the hyperbola $h$ and the line $s$?
A connection is provided by the following functional properties.
\begin{lemma} \label{factorout-lem}
For all $p$, we have
\begin{eqnarray*}
   w_p(h(x)) &=& \frac{1}{|x|^p} \ v_p(x)  \\
   w_p(s(x))  &=& \frac{1}{|u|^p} \ v_p(x).
\end{eqnarray*}
\end{lemma}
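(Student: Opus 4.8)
The plan is to verify both identities by direct substitution, relying only on the multiplicativity of the absolute value, $|AB|^p = |A|^p|B|^p$. It is worth stressing that these are exact algebraic identities holding for every real $p$ (wherever the expressions are defined), not asymptotic statements; this is precisely what will later let us compare $w_p$ evaluated on the limit curves $h$ and $s$ with $v_p(x)$ and read off the deviation as a factor $|x|^{-p}$ or $|u|^{-p}$.

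For the hyperbola I would substitute $y = h(x) = -u/x$ into $w_p(y) = |y-1|^p - |y+1|^p$ and put each argument over the common denominator $x$:
\[
   h(x) - 1 = -\frac{x+u}{x}, \qquad h(x) + 1 = \frac{x-u}{x}.
\]
Raising the absolute values to the $p$-th power, the common factor $|x|^{-p}$ pulls out of both terms, while the overall sign in the first numerator disappears under $|\cdot|$. The two numerators are then exactly $|x+u|$ and $|x-u|$, so the bracket collapses to $v_p(x) = |x+u|^p - |x-u|^p$, giving $w_p(h(x)) = |x|^{-p}\, v_p(x)$. For the line I would proceed identically with $y = s(x) = -x/u$ over the denominator $u$, where
\[
   s(x) - 1 = -\frac{x+u}{u}, \qquad s(x) + 1 = \frac{u-x}{u},
\]
so that $|u|^{-p}$ factors out and the numerators $|x+u|$ and $|u-x| = |x-u|$ again reassemble $v_p(x)$, yielding $w_p(s(x)) = |u|^{-p}\, v_p(x)$.

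There is no genuine obstacle here; the computation is short and purely formal. The only point needing a moment's care is matching each numerator with the correct term of $v_p$: one must check that $h(x)-1$ (respectively $s(x)-1$) contributes the $|x+u|$ term and $h(x)+1$ (respectively $s(x)+1$) the $|x-u|$ term, so that the order of subtraction in $w_p$ and in $v_p$ coincides. The overall minus signs generated when factoring out $-1/x$ (respectively $-1/u$) are immaterial, as they are swallowed by the absolute values.
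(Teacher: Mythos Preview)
Your proof is correct and is essentially the same direct substitution the paper carries out: both arguments factor $|x|^{-p}$ (respectively $|u|^{-p}$) out of $w_p(h(x))$ and $w_p(s(x))$ and identify the remaining bracket with $v_p(x)$. The paper presents it by expanding $v_p(x)$ in two ways rather than expanding $w_p$, but the algebra is identical.
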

\begin{proof}
We have
\begin{eqnarray*}
   w_p(h(x)) &=& \left|-\frac{u}{x} - 1\right|^p   \  -  \   \left|-\frac{u}{x} + 1\right|^p  \  
                  = \ \left|1+\frac{u}{x}\right|^p  \   - \   \left|1-\frac{u}{x}\right|^p    \mbox{\ \ \ and}  \\
   w_p(s(x))  &=& \left|-\frac{x}{u} - 1\right|^p   \  -  \  \left|-\frac{x}{u} + 1\right|^p   \ 
                 = \  \left|\frac{x}{u}+1\right|^p   \  -  \  \left|\frac{x}{u}-1\right|^p    \mbox{\ \ \ as well as}  \\
   v_p(x)     &=& |x|^p \ \biggl(  \left|1+\frac{u}{x}\right|^p  \   - \   \left|1-\frac{u}{x}\right|^p   \biggr)   \  
                 = \   |u^ p| \ \biggl(  \left|\frac{x}{u}+1\right|^p   \  - \    \left|\frac{x}{u}-1\right|^p   \biggr).
 \end{eqnarray*}
\end{proof}
Lemma \ref{factorout-lem} has a useful consequence. 
\begin{lemma}\label{err-lem}
Let $b=(x,y_p)$ be a bisector point of $B_p(a,b)$. Then,
\begin{eqnarray*}
w_p(h(x)) - w_p(y_p) &=& \left(\frac{1}{|x|^p} -1\right)\ v_p(x)  \\
w_p(s(x)) - w_p(y_p) &=& \left(\frac{1}{|u|^p} -1\right)\ v_p(x) .
\end{eqnarray*}
\end{lemma}

The idea of our proof for Lemma~\ref{conv-theo} is as follows. We use Lemma \ref{err-lem} to provide an upper bound to the error we incur
in bisector equation~\ref{mitabs} if we replace the true value $y_p$ by the approximate value $h(x)$ or $s(x)$. Then, we 
apply the mean value theorem to the function $w_p(y)$, which is continuous and everywhere differentiable except at $y=\pm 1$,
to upper-bound $|y_p - h(x)|$, or $|y_p -s(x)|$, by said error. This leads to the following estimates.

\begin{lemma}\label{mainform-lem} Define $ z_p(y):=  \frac{1}{p} \frac{d}{dy}w_p(y)$. If $\{y_p, h(x)\}$ or $\{y_p, s(x)\}$ are both situated in the same $Y$-interval $(-\infty, -1), (-1,1)$, or $(1, \infty)$, and $x \not=0$, then:
\begin{eqnarray*}
|y_p-h(x)| &\leq &    |z_p(y^{*})|^{-1} \cdot o(1)  \mbox{\rm \ \  for some } y^{*} \mbox{\rm \   between } y_p \mbox {\rm \ and } h(x)\\
|y_p-s(x)| &\leq &    |z_p(y^{*})|^{-1} \cdot o(1)  \mbox{\rm \ \  for some } y^{*} \mbox{\rm  \  between } y_p \mbox {\rm \ and } s(x) \label{mainform},
\end{eqnarray*}
where $o(1)$ denotes a function $f$ of $x$ and $p$ such that for all $x \neq 0$, we have $\lim_{p\rightarrow 0} f(x,p) = 0$. 
\end{lemma}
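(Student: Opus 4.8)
The plan is to invert the exact error identities of Lemma~\ref{err-lem} by means of the mean value theorem, turning a statement about the error in the bisector equation~\ref{mitabs} into a bound on $|y_p - h(x)|$ and $|y_p - s(x)|$. Concretely, for the hyperbola case I would fix $x \neq 0$ and note that, by hypothesis, $y_p$ and $h(x)$ lie in a common one of the intervals $(-\infty,-1)$, $(-1,1)$, $(1,\infty)$, on each of which $w_p$ is continuously differentiable (its only non-differentiable points are $y = \pm 1$). The mean value theorem then yields some $y^*$ strictly between $y_p$ and $h(x)$ with
\[
  w_p(h(x)) - w_p(y_p) = w_p'(y^*)\,\bigl(h(x) - y_p\bigr).
\]
Since $w_p'(y^*) = p\,z_p(y^*)$ by the definition of $z_p$, and since a short sign analysis shows $z_p(y^*) \neq 0$, I can solve for the displacement and take absolute values to get
\[
  |y_p - h(x)| = |z_p(y^*)|^{-1}\cdot\frac{|w_p(h(x)) - w_p(y_p)|}{|p|}.
\]

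Next I would substitute the first identity of Lemma~\ref{err-lem}, namely $w_p(h(x)) - w_p(y_p) = (|x|^{-p} - 1)\,v_p(x)$, to obtain $|y_p - h(x)| = |z_p(y^*)|^{-1}\cdot f(x,p)$, where $f(x,p):=\bigl|\,|x|^{-p} - 1\,\bigr|\,|v_p(x)|/|p|$. It then remains to show that $f(x,p) \to 0$ as $p \to 0$ for every fixed $x \neq 0$, which is exactly the claimed $o(1)$. Writing $|x|^{-p} = e^{-p\ln|x|}$, the standard limit $(e^t - 1)/t \to 1$ gives $\bigl|\,|x|^{-p} - 1\,\bigr|/|p| \to |\ln|x||$, a finite constant. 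At the same time $v_p(x) = e^{p\ln|x+u|} - e^{p\ln|x-u|} \to 0$ as $p \to 0$ (here I use that $x$ is interior to its cell, so $x \neq \pm u$ and both exponentials tend to $1$). Hence $f(x,p)$ is a bounded factor times a factor tending to $0$, so $f(x,p) \to 0$, proving the first inequality. The line case is identical: the mean value theorem and the second identity of Lemma~\ref{err-lem} give the same bound with $|u|^{-p}$ in place of $|x|^{-p}$, and $\bigl|\,|u|^{-p} - 1\,\bigr|/|p| \to |\ln|u||$ is again a finite constant, so its product with $v_p(x) \to 0$ is once more $o(1)$.

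The two Taylor estimates are harmless; the one step that needs genuine care is the legitimacy of the mean value theorem argument, namely ensuring that the segment joining $y_p$ to $h(x)$ (resp.\ $s(x)$) stays inside a single differentiability interval of $w_p$ and that $z_p(y^*)$ does not vanish there. The interval condition is precisely the hypothesis of the lemma. The non-vanishing of $z_p$ follows from the observation that, for $p<1$, the map $t\mapsto t^{p-1}$ is strictly decreasing on $(0,\infty)$; differentiating $w_p$ on each interval then shows that $z_p$ has a constant nonzero sign there (positive on $(-\infty,-1)$ and $(1,\infty)$, negative on $(-1,1)$). Keeping the factor $|z_p(y^*)|^{-1}$ unabsorbed in the final bound is deliberate: it isolates all the potentially unbounded behaviour into $z_p$, to be controlled separately when Lemma~\ref{conv-theo} is derived from this estimate.
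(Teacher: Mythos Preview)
Your proof is correct and follows essentially the same route as the paper: apply the mean value theorem to $w_p$ on the common $Y$-interval, substitute the identity from Lemma~\ref{err-lem}, and show that the remaining factor $\frac{1}{p}(|x|^{-p}-1)\,v_p(x)$ (respectively with $|u|^{-p}$) tends to zero because the first quotient converges to $\ln|x|$ while $v_p(x)\to 0$. The only cosmetic differences are that the paper invokes L'Hospital's rule where you use the elementary limit $(e^t-1)/t\to 1$, and that you make explicit two points the paper leaves implicit, namely that $z_p(y^*)\neq 0$ on each $Y$-interval and that $x\neq\pm u$ is needed for $v_p(x)\to 0$.
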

\begin{proof}
By the mean value theorem, there exists $y^{*}$ between $y_p$ and $h(x)$ such that
\begin{eqnarray}
 y_p - h(x)  &=&   \frac{1}{\frac{d}{dy}w_p(y^{*})} \cdot (w_p(y_p) - w_p(h(x))) \nonumber\\
 & = &  \frac{1}{z_p(y^{*})} \cdot \frac{1}{p}\,  (w_p(y_p) - w_p(h(x))) \nonumber\\
                  &=& \frac{1}{z_p(y^{*})} \cdot \frac{1}{p} \, \left(1-\frac{1}{|x|^p}\right) \cdot v_p(x) \label{factors},    
\end{eqnarray}
by Lemma \ref{err-lem}. In
\[
     \frac{\left(1-\frac{1}{|x|^p}\right)}{p}
\]
both numerator and denominator converge to zero as $p$ tends to zero. By L'Hospital's rule, 
the limit of this fraction equals the limit
of the fraction of the derivatives with respect to $p$.  While the denominator has derivative 1, the numerator's 
 derivative is
 \[
      \frac{d}{dp}\left(1-\frac{1}{|x|^p}\right) \ = \ -\frac{-\frac{d}{dp} |x|^p}{|x|^{2p}} \ =\  \frac{|x|^p\cdot \ln |x|}{|x|^{2p}} \  = \ |x|^{-p}\cdot \ln |x|,
 \]
which goes to $\ln |x|$. Since $v_p(x)$ in equation \ref{factors} tends to zero with $p$, so does its product with 
the convergent factor $\frac{1}{p} \, (1-\frac{1}{|x|^p})$.
This proves the claim for $h(x)$, with $f(x,p) := \frac{1}{p} \, (1-\frac{1}{|x|^p})\, v_p(x)$. The case of $s(x)$ is analogous.
\end{proof}
While both estimates in Lemma \ref{mainform-lem} may apply to a given value $x$, the two values $y_p$ will be situated in 
different $Y$-intervals because $s(x)$ and $h(x)$ are; see Figure \ref{cell-fig}.

\smallskip
The remaining challenge is in establishing an upper bound on $|z_p(y^{*})|^{-1} $ that 
converges as $p$ tends to 0 (the limit of convergence may depend on $x$). 
This task will require some case analysis, because 
different signs of the expressions 
within $|\cdot|$  in Equation \ref{mitabs}, and the coordinate axes, split the plane into several cells shown in Figure \ref{cell-fig}. 
Of the cells coloured grey, those of type $S$ contain a segment of line $s(x)=-x/u$,  and those labelled $H$, a segment of hyperbola $h(x)=-u/x$.
Figure \ref{cell-fig} also shows the equations for $v_p(x)$ and $w_p(y)$ in their respective columns and rows of cells. 

\subsection{Hyperbolae}\label{hyp-ssect}
We begin our case analysis with the cells \region{H_i} for which we want to prove that $B_p(a,b)$  converges to $y=h(x)$, using
Lemma \ref{mainform-lem}.

First, let $(x,y_p)_p$, for $p\rightarrow0$, be a sequence of points on $B_p(a,b)$ in \region{H_2}.
\begin{figure}
\begin{center}
\includegraphics[scale=0.5]{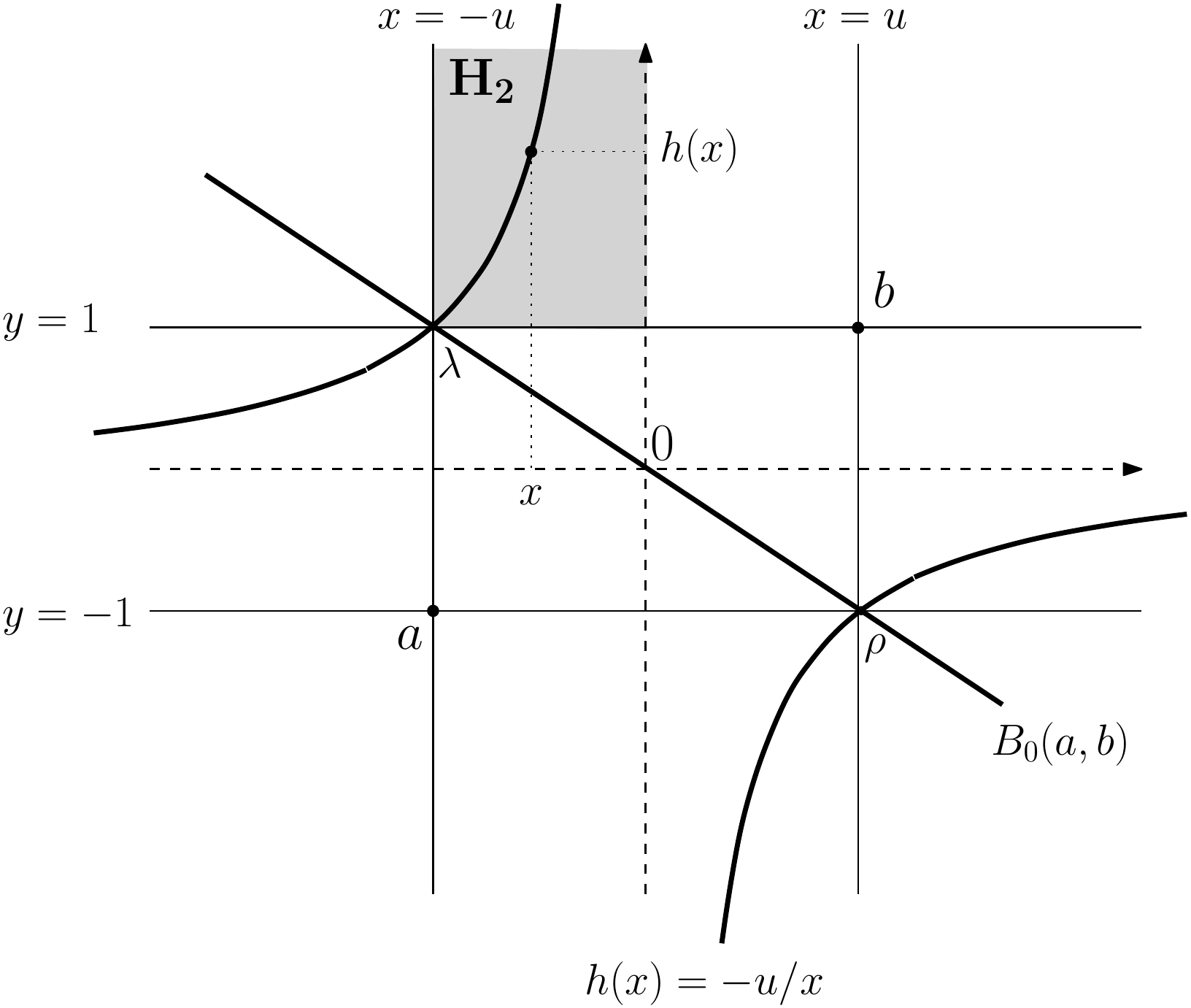}
\caption{A bisector of two points $a,b$ in the $L_0$ diagram. Their Voronoi regions look as indicated in Figure \ref{vorosketch-fig}.
The number $h(x)$ denotes the $Y$-coordinate of the point marked.}
\label{l0bisec-fig}
\end{center}
\end{figure}
Note that inside \region{H_2} we always have $-u < x < 0$, and $h(x), y_p > 1$; see Figure~\ref{l0bisec-fig}.
Moreover, Equation \ref{mitabs} becomes
\begin{eqnarray*}
  v_p(x)  \, = \,  (u+x)^p - (u-x)^p \ = \ (y-1)^p - (y+1)^p \, = \, w_p(y),
 \end{eqnarray*}
 compare Figure \ref{cell-fig}. We want to find an upper bound on $|z_p(y^{*})|^{-1}$, where
\begin{eqnarray}
 |z_p(y)|^{-1}  \, = \,  \frac{1}{ (y -1)^{p-1} -  (y + 1)^{p-1} } \label{whereto}
 \end{eqnarray}
 and $y^{*}$ lies between $y_p$ and $h(x)$. To this end, we first observe:
 
 \begin{lemma}\label{lowbo}
For all $p$ close enough to zero, $y_p$ and $h(x)$ both lie in the open interval $(1,  2\, h(x) + 3)$.
\end{lemma}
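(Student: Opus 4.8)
### Proof Proposal for Lemma \ref{lowbo}

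The plan is to establish the claimed two-sided bound by analysing the defining bisector equation in \region{H_2} directly, exploiting the fact that throughout this cell we are working with the explicit forms $v_p(x) = (u+x)^p - (u-x)^p$ and $w_p(y) = (y-1)^p - (y+1)^p$. The lower bound $y_p > 1$ is immediate, since \region{H_2} by definition sits in the $Y$-interval $(1,\infty)$, and likewise $h(x) = -u/x > 1$ because $-u < x < 0$ forces $0 < -x < u$, hence $-u/x > 1$. So the genuine content of the statement is the \emph{upper} bound: both $y_p$ and $h(x)$ must eventually stay below $2\,h(x)+3$.

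For $h(x)$ the upper bound is trivial: since $h(x) > 1$ we have $2\,h(x)+3 > h(x)+ (h(x)+3) > h(x)$, so $h(x)$ lies in the open interval regardless of $p$. The real work is to bound $y_p$. First I would compare $v_p(x)$ with $w_p(h(x))$. By Lemma \ref{factorout-lem} we know $w_p(h(x)) = |x|^{-p} v_p(x)$, and as $p \to 0$ the factor $|x|^{-p} \to 1$; the bisector condition gives $w_p(y_p) = v_p(x)$. Thus $w_p(y_p)$ and $w_p(h(x))$ differ only by the factor $|x|^{-p}$, which tends to $1$. The strategy is to show that $w_p$ is, on the interval $(1,\infty)$, decreasing in $y$ for small $|p|$ (both for $p>0$ and $p<0$), so that this small multiplicative discrepancy between $w_p(y_p)$ and $w_p(h(x))$ translates into $y_p$ being close to $h(x)$ — and in particular $y_p$ cannot run off to values as large as $2\,h(x)+3$.

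Concretely, I would verify monotonicity of $w_p$ on $(1,\infty)$ by inspecting its derivative $w_p'(y) = p\bigl((y-1)^{p-1}-(y+1)^{p-1}\bigr)$: for $y>1$ we have $y-1 < y+1$, and since the exponent $p-1$ is negative for $|p|<1$, the term $(y-1)^{p-1}$ exceeds $(y+1)^{p-1}$, so the bracket is positive; multiplying by $p$ shows $w_p$ is increasing for $p>0$ and decreasing for $p<0$ — in either case \emph{strictly monotone}, so invertible on $(1,\infty)$. Given monotonicity, the equation $w_p(y_p) = |x|^{p} w_p(h(x))$ (rewriting the relation) pins $y_p$ to one side of $h(x)$ by a controlled amount. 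I would then argue that for $p$ small enough the multiplicative perturbation $|x|^{\pm p}$, being within a factor arbitrarily close to $1$ of unity, cannot push the solution $y_p$ past the generous threshold $2\,h(x)+3$: the slack of $h(x)+3$ above $h(x)$ is bounded below (since $h(x)>1$ gives slack $>4$), whereas the displacement of $y_p$ from $h(x)$ shrinks to $0$ with $p$ by exactly the estimate underlying Lemma \ref{mainform-lem}.

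The main obstacle I anticipate is handling the two sign regimes of $p$ uniformly and making the ``for all $p$ close enough to zero'' quantifier honest: the displacement of $y_p$ from $h(x)$ depends on $x$, and near the cell boundary where $h(x)$ approaches $1$ the available slack $h(x)+3$ shrinks toward $4$ but stays bounded away from $0$, so the argument should survive — but I would need to check that the convergence $y_p \to h(x)$ is fast enough relative to this fixed slack, rather than merely pointwise. Since the looseness of the constant $3$ (and the factor $2$ on $h(x)$) is exactly there to absorb this non-uniformity, I expect the generous interval to make the bound go through once monotonicity and the $|x|^{\pm p}\to 1$ control are in hand.
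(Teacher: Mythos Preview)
Your proposal contains a genuine circularity. In the paper's logical flow, Lemma~\ref{lowbo} is a \emph{prerequisite} for bounding $|z_p(y^{*})|^{-1}$: only once we know that $y^{*}$ (which lies between $y_p$ and $h(x)$) is trapped in $(1,\,2h(x)+3)$ can we plug that bound into Equation~\ref{whereto} and conclude that $|z_p(y^{*})|^{-1}$ stays bounded as $p\to 0$. That bound, together with Lemma~\ref{mainform-lem}, is what finally yields $y_p\to h(x)$. Your argument runs this in reverse: you invoke ``the estimate underlying Lemma~\ref{mainform-lem}'' to assert that the displacement $|y_p-h(x)|$ shrinks to~$0$, and from that conclude $y_p<2h(x)+3$. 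But Lemma~\ref{mainform-lem} only gives $|y_p-h(x)|\le |z_p(y^{*})|^{-1}\cdot o(1)$, and without an a~priori bound on $y^{*}$ --- which is exactly what Lemma~\ref{lowbo} is meant to supply --- the factor $|z_p(y^{*})|^{-1}$ could blow up. So the step ``the displacement of $y_p$ from $h(x)$ shrinks to $0$'' is unsupported at this point in the argument.

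The paper avoids this loop by giving a self-contained proof of Lemma~\ref{lowbo} that does not touch Lemma~\ref{mainform-lem} at all: it rewrites the bisector equation as $j(x)-j(-x)=m(1)-m(-1)$ with $j(z)=(u+z)^p$ and $m(z)=(y_p-z)^p$, applies the mean value theorem to each side, solves explicitly for $y_p$ in terms of intermediate points $z_1,z_2$, and bounds the result directly using $(-x)^{1/(p-1)}<2/(-x)$ for small~$|p|$. Your monotonicity setup (the relation $w_p(y_p)=|x|^p\,w_p(h(x))$ with $w_p$ strictly monotone on $(1,\infty)$) is correct and could in principle be completed non-circularly --- for instance by directly comparing $w_p(2h(x)+3)$ with $|x|^p\,w_p(h(x))$ via a first-order expansion in~$p$ --- but that is not the argument you actually gave.
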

\begin{proof}
$y_p$ and $h(x)$ are both greater than 1, as we are considering \region{H_2}. Obviously, $h(x) < 2 h(x) + 3$. It remains to show $y_p < 2 h(x) + 3$. 
Let us rewrite the bisector equation for $(x,y_p)$,
\begin{eqnarray*}
    (u+x)^p -(u-x)^p &=& (y_p-1)^p - (y_p+1)^p, \mbox{\ \ \ as} \\
   j(x)-j(-x) &=& m(1)-m(-1)
 \end{eqnarray*}
with functions $j(z):=(u+z)^p$ and $m(z):= (y_p-z)^p$. Applying the mean value theorem to both sides yields%
\footnote{We can write $j'(z)$ since it is clear that the derivative is taken with respect to $z$.}
\begin{eqnarray*}
   2x\, p\,  (u+z_1)^{p-1} \ =\  2x\, j'(z_1) &=&  2\, m'(z_2) \  =\  -2\, p\,  (y_p - z_2)^{p-1}    
 \end{eqnarray*}
 with $z_1 \in (-x,x)$ and $z_2 \in (-1,1)$. If $|p|$ is small enough to guarantee $(-x)^{\frac{1}{p-1}} < \frac{2}{-x}$, then cancelling out $-2p$, raising both sides to power $\frac{1}{p-1}$, and solving for 
 $y_p$ leads to 
 \begin{eqnarray*}
    y_p &=&  (-x)^{\frac{1}{p-1}} \, (u+z_1) - z_2   \\
           &<&  (-x)^{\frac{1}{p-1}} \, (u-x) + 1 \\
           &<&  2\,\frac{u}{-x} + 3 \\
           &=& 2\,h(x) + 3.
\end{eqnarray*}
\end{proof}

Furthermore, $|z_p(y)|^{-1}$ increases with $y$, as the denominator in Equation~\ref{whereto} decreases in $y$. Indeed, 
\[
     \frac{d}{dy} \biggl( (y -1)^{p-1} -  (y + 1)^{p-1} \biggr) \, =\,  (p-1) \biggl(   (y - 1)^{p-2}  -   (y + 1)^{p-2}  \biggr) \ < 0
 \]
holds since $p-1$ is negative and $(y - 1)^{p-2}  -   (y + 1)^{p-2}$ is positive for $y > 1$ and $p < 1$. 

With Lemma~\ref{lowbo} it follows that $|z_p(y^*)|^{-1} < |z_p(y)|^{-1}$ for $y = 2 h(x) + 3$. Thus we obtain:
\begin{eqnarray*}
|z_p(y^*)|^{-1} & < & |z_p(2 h(x) + 3)|^{-1} \\
& = & \frac{1}{ (2h(x) + 2)^{p-1} -  (2 h(x) + 4)^{p-1} } \\
& = & \frac{ (2 h(x) + 2)^{1-p} \cdot (2 h(x) +4)^{1-p} }{ (2 h(x) + 4)^{1-p} -  (2 h(x) + 2)^{1-p} }.
\end{eqnarray*}

This bound converges (namely, to $(2 h(x) + 2)(2 h(x) + 4)/2$) as $p$ tends to zero. Thus, with Lemma~\ref{mainform-lem} it follows that sequence $(x,y_p)_p$ in \region{H_2}  converges to the hyperbola point $(x,h(x))$ for $p\rightarrow0$.

\bigskip
Next, we consider the hyperbola in cell \region{H_4}. In here, the bisector equation reads
\[
   v_p(x) \, = \, (x+u)^p - (x-u)^p \ =\  (1-y)^p - (1+y)^p\, = \, w_p(y),
\]
and we want to establish an upper bound on $|z_p(y^{*})|^{-1}$, where
\begin{eqnarray}
 |z_p(y)|^{-1}  \, = \,  \frac{1}{ (1 + y)^{p-1} +  (1 - y)^{p-1} } \label{wheretoH4}
 \end{eqnarray}
and $y^{*}$ lies between $y_p$ and $h(x)$, which, by definition of \region{H_4}, both lie in the open interval $(-1,1)$. 

If $y$ in Equation~\ref{wheretoH4} gets close to $1$, the term $(1-y)^{p-1} $ tends to infinity, so that $|z_p(y)|^{-1}$ goes to zero.
The same holds for $y$ close to $-1$. Between these extremes, that is, inside the open interval $(-1,1)$, the derivative
\[
      \frac{d}{dy} \, \frac{1}{ (1+y)^{p-1} +  (1-y)^{p-1} }  \ = \ 
      (p-1) \, \frac{ (1-y)^{p-2}  - (1+y)^{p-2} }{ ( (1+y)^{p-1} +  (1-y )^{p-1} )^2  }
\]
has a unique zero for $1-y = 1+y$, that is, at $y=0$, where   $|z_p(y)|^{-1}$ takes on its maximum value $\frac{1}{2}$. Thus we get $|z_p(y^{*})|^{-1} \leq |z_p(0)|^{-1} = \frac12$, which, with Lemma~\ref{mainform-lem}, completes the proof that the sequence $(x,y_p)_p$ in \region{H_4}  converges to the hyperbola point $(x,h(x))$ for $p\rightarrow0$.

\medskip
The analysis of cells \region{H_1} and \region{H_3} is symmetric.%

\subsection{Lines}\label{line-ssect}
Now we show that the part of $B_p(a,b)$ within cells \region{S_i} converges to the line $s(x)=-x/u$ passing through vertices $\lambda$ and $\rho$ of the bounding box of $a,b$. First, we consider cell  \region{S_1}; see Figure \ref{l0bisec3-fig}.
\begin{figure}
\begin{center}
\includegraphics[scale=0.5]{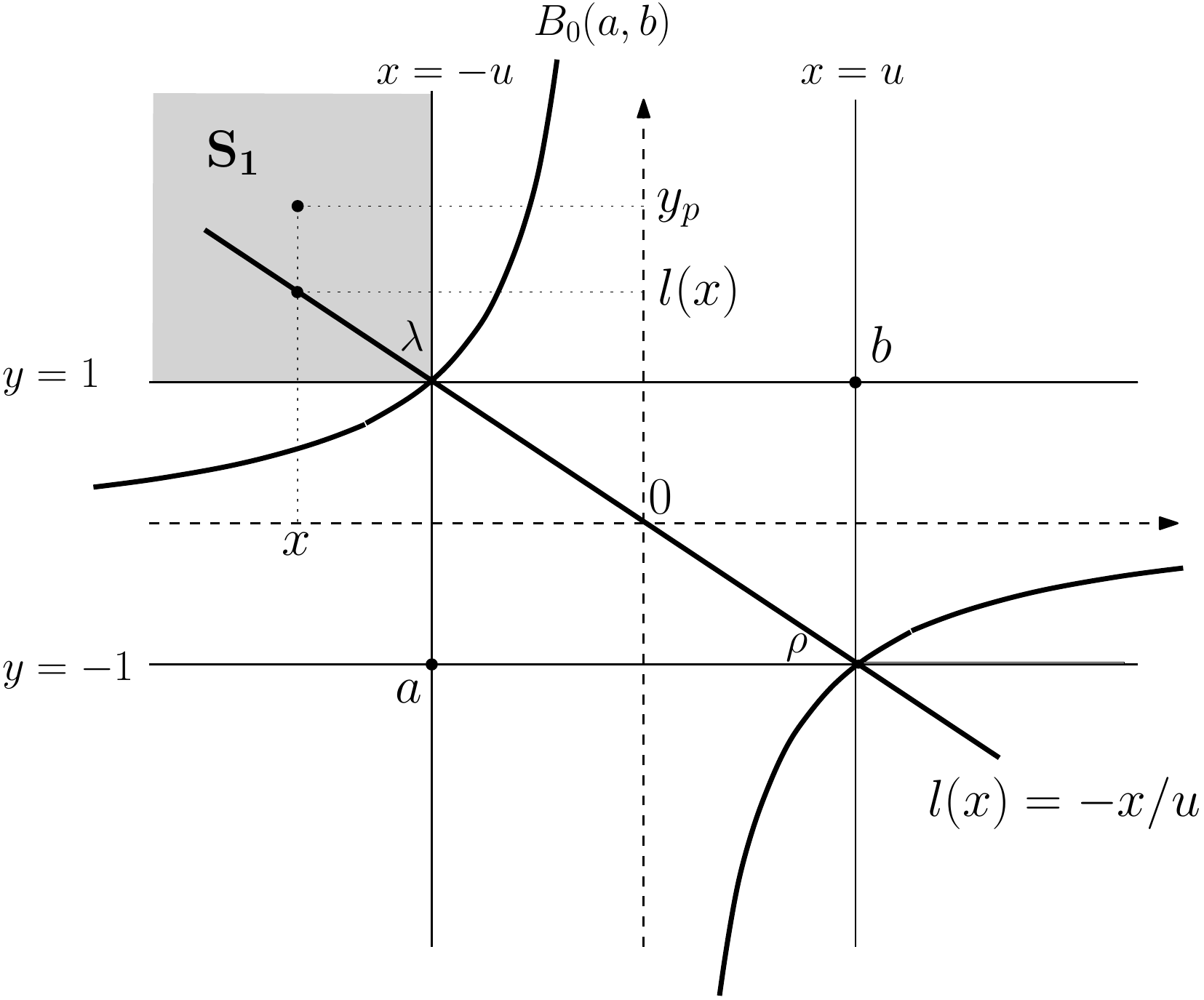}
\caption{The line $s(x)=-x/u$ to the left of vertex $\lambda$.}
\label{l0bisec3-fig}
\end{center}
\end{figure}
Here the bisector equation  reads
\begin{eqnarray*}
 v_p(x) \, =\,  (-u-x)^p \, - \,  ( u-x)^p \ =\  (y-1)^p -(y+1)^p\,  = \, w_p(y), 
\end{eqnarray*}
which results in 
\[
|z_p(y)|^{-1}  \, = \,  \frac{1}{ (y-1)^{p-1} -  (y +1)^{p-1} }  
\]
This expression is the same as Equation \ref{whereto} in the discussion of cell \region{H_2}. We can deal with it in the same way, replacing $h(x)$ by $s(x)$ and with $x$ and $u$ changing roles accordingly. For completeness, we include the proof of the resulting variation on Lemma~\ref{lowbo}:
 
\begin{lemma}\label{lowbo2}
For all $p$ close enough to zero we have $y_p < 2 \, s(x) +3 $.
\end{lemma}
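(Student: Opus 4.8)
The plan is to mirror the proof of Lemma~\ref{lowbo}, exchanging the roles of $x$ and $u$ as the text indicates. In cell \region{S_1} we have $x < -u$, so $s(x) = -x/u > 1$ and all the bases appearing below are positive; here the bisector equation reads $(-u-x)^p - (u-x)^p = (y_p-1)^p - (y_p+1)^p$, which is structurally identical to the \region{H_2} identity but with $u$ now occupying the position that $x$ held there. I would first note that $y_p$ and $s(x)$ then both lie in $(1,\infty)$, so the relevant case of Lemma~\ref{mainform-lem} applies.

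The key step is to write each side as a difference of a single function evaluated at symmetric arguments and apply the mean value theorem. Setting $j(z) := (-x-z)^p$ and $m(z) := (y_p-z)^p$, the equation becomes $j(u) - j(-u) = m(1) - m(-1)$. The mean value theorem gives $2u\,j'(z_1) = 2\,m'(z_2)$ for some $z_1 \in (-u,u)$ and $z_2 \in (-1,1)$. Substituting $j'(z) = -p(-x-z)^{p-1}$ and $m'(z) = -p(y_p-z)^{p-1}$ and cancelling the common factor $-2p$ leaves $u\,(-x-z_1)^{p-1} = (y_p-z_2)^{p-1}$.

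I would then raise both sides to the power $1/(p-1)$, which is valid because both bases are positive, and solve for $y_p$, obtaining $y_p = u^{1/(p-1)}(-x-z_1) + z_2$. Bounding with $z_1 > -u$ (so $-x-z_1 < u-x$) and $z_2 < 1$ gives $y_p < u^{1/(p-1)}(u-x) + 1$. The one analytic ingredient is the analogue of the small-$|p|$ hypothesis used in Lemma~\ref{lowbo}: since $1/(p-1) \to -1$ as $p \to 0$, we have $u^{1/(p-1)} \to 1/u$, so for all $p$ close enough to zero $u^{1/(p-1)} < 2/u$. Substituting yields $y_p < \tfrac{2}{u}(u-x) + 1 = 2(-x/u) + 3 = 2\,s(x) + 3$, as desired.

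I do not expect a genuine obstacle, since the argument is the $x \leftrightarrow u$ transcription of Lemma~\ref{lowbo}; the step demanding the most care is the sign bookkeeping. Because $p-1 < 0$, the exponent $1/(p-1)$ is negative, so I must verify throughout that each quantity raised to a power is positive and that the inequalities keep their direction when I replace $z_1$ and $z_2$ by their bounds. Once those signs are checked, the substantive content — the two applications of the mean value theorem and the limit $u^{1/(p-1)} \to 1/u$ — carries over unchanged.
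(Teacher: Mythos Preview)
Your argument is correct and essentially identical to the paper's own proof: both rewrite the bisector equation as $j(u)-j(-u)=m(1)-m(-1)$ with $j(z)=(-x-z)^p$ and $m(z)=(y_p-z)^p$, apply the mean value theorem on each side, cancel $-2p$, take the $(p-1)$-th root to get $y_p = u^{1/(p-1)}(-x-z_1)+z_2$, and then use $-x-z_1<u-x$, $z_2<1$, and $u^{1/(p-1)}<2/u$ for small $|p|$ to conclude. The sign checks you flag are exactly the ones the paper leaves implicit.
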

\begin{proof}
Here, we rewrite the equation for $(x,y_p)$ 
\begin{eqnarray*}
    (-u-x)^p -(u-x)^p &=& (y_p-1)^p - (y_p+1)^p \mbox{\ \ \ as} \\
   j(u)-j(-u) &=& m(1)-m(-1)
 \end{eqnarray*}
with functions $j(z):=(-z-x)^p$ and $m(z):= (y_p-z)^p$. Applying the mean value theorem to both sides yields
\begin{eqnarray*}
   2u \cdot (-p)\,  (-z_1-x)^{p-1} \ =\  2u\, j'(z_1) &=&  2\, m'(z_2) \  =\  2\, (-p)\,  (y_p -z_2)^{p-1}    
 \end{eqnarray*}
with $z_1 \in (-u,u)$ and $z_2 \in (-1,1)$. 
If $|p|$ is small enough to guarantee $u^{\frac{1}{p-1}} < \frac{2}{u}$,
then cancelling out $-2p$, raising both sides to power $\frac{1}{p-1}$, and solving for $y_p$ leads to 
 \begin{eqnarray*}
     y_p &=&  u^{ \frac{1}{p-1}  } \, (-z_1-x) + z_2   \\
           &<&  u^{\frac{1}{p-1}} \, (u-x) + 1 \\
           &<&  2\frac{-x}{u} + 3 \\
           &=& 2\, s(x) +3
 \end{eqnarray*}
\end{proof}

The rest of the proof follows the same reasoning as for \region{H_2}, with the upper bound on $|z_p(y^*)|^{-1}$ converging to $(2(s(x)+2)(2(s(x)+4)/2$. With Lemma~\ref{mainform-lem} it follows that sequence $(x,y_p)_p$ in \region{H_1} converges to the point $(x,s(x))$ for $p\rightarrow0$. 
For cell \region{S_4}, the discussion follows the same line. 

So let us, finally, look at cell \region{S_2} in the centre,
where the bisector equation reads
\begin{eqnarray*}
 v_p(x) \, =\,  (u+x)^p \, - \,  ( u-x)^p \ =\  (1-y)^p -(1+y)^p\,  = \, w_p(y). 
\end{eqnarray*}
We obtain
\begin{eqnarray*}
|z_p(y)|^{-1}  \, = \,  \frac{1}{ (1 + y)^{p-1} +  (1 - y)^{p-1} }\ , 
\end{eqnarray*}
the same as in \region{H_4}. As calculated in Section~\ref{hyp-ssect}, it follows that $|z_p(y^*)|^{-1}  < \frac{1}{2}$ holds and therefore, by Lemma~\ref{mainform-lem}, the sequence $(x,y_p)_p$ in \region{S_2} converges to the point $(x,s(x))$ for $p \rightarrow 0$. The same applies to \region{S_3}.

\smallskip
This completes the proof of Lemma~\ref{conv-theo}.

\subsection{Proof of existence}\label{existence-sec}
It remains to prove Lemma \ref{cell-lem} of Section  \ref{circ-sect} that ensures  the existence of the bisector points
$(x,y_p)_p$ referred to in Lemma~\ref{conv-theo}. Again, it is helpful to employ a tool like GeoGebra to visualise the graphs
of the functions $v_p$ and $w_p$ on their respective intervals.

\medskip
\begin{proof} (of Lemma \ref{cell-lem})
We refer to Figure \ref{cell-fig} and the notations given there. We shall study the bisector $B_p(a,b)$ to the right of the vertical axis; then our arguments carry over to the negative $X$-range by symmetry.

First observe that \emph{all} points $(x,y)$
within the upper right quadrant are closer to $b$ than to $a$ with respect to \emph{both} coordinates. Thus, the white cells in the upper right quadrant belong to the Voronoi region of $b$ and cannot contain any bisector points of $B_p(a,b)$. 

It remains to prove that for any $x \in (0,u)$, the cells \region{H_3} and \region{S_3} each contain a bisector point of $B_p(a,b)$ on the vertical line at $x$, and for any $x \in (u,\infty)$, the cells \region{S_4} and \region{H_4} each contain a bisector point of $B_p(a,b)$ on the vertical line at $x$. This time, the cases $p > 0$ and $p < 0$ require different arguments.

\bigskip
We first consider the case $p > 0$. 
If $x \in (0,u)$, then $v_p(x) = (u+x)^p - (u-x)^p$;
if $x \in (u,\infty)$, then $v_p(x) = (x+u)^p - (x-u)^p$. Thus, for any $x \in (0,u) \cup (u,\infty)$, it holds that $v_p(x) < 1$ once $p$ is small enough.

Now let $x \in (0,u) \cup (u,\infty)$ be fixed.
Then, for the lowest $Y$-interval $(-\infty,-1)$ we have $w_p(y)= (-y+1)^p -(-y-1)^p$, hence%
\footnote{One can apply L'Hospital's rule to  $w_p(y) \, =\,  \frac{ (\frac{-y+1}{-y-1})^p -1  }{ (-y-1)^{-p}   }$ to show
that $w_p(y)$ goes to zero for $-\infty\leftarrow y$.}
\[
     w_p((-\infty,-1)) \ = \ (0,2^p).
\] 
Once $p$ is small enough to ensure $v_p(x) < 1$, there will always be $y_p$ in $(-\infty,-1)$ such that
$v_p(x)=w_p(y_p)$ holds, because $2^p$ goes to $1$ from above. Thus, there is a point $(x,y_p)$ of $B_p(a,b)$ in \region{H_3} for any $x \in (0,u)$ and there is a point $(x,y_p)$ of $B_p(a,b)$ in \region{S_4} for any $x \in (u,\infty)$.

For $y$ in the $Y$-range $(-1,0)$, we have $w_p(y)= (1-y)^p - (1+y)^p$ and $w_p((-1,0 )) = (0,2^p)$, so that the same arguments apply for regions \region{S_3} and \region{H_4}.

\bigskip
Now consider the case of $p < 0$.
Function $v_p(x)$ is now negative and goes to $-\infty$ at $x=u$, whereas $v_p(x)$ goes to 0 for $x=0$ and $x\rightarrow\infty$. Thus we have 
$v_p((0,u)) = v_p((u,\infty)) = (-\infty, 0)$.
On the other hand, we have $w_p((-\infty,-1 )) = (-\infty,0)$, with a pole at $y=-1$,
covering every possible value of $v_p(x)$. Thus, for any $x$ in $(0,u)$ or $(u,\infty)$ there is a $y_p \in (-\infty,-1)$ such that $v_p(x) = w_p(y_p)$ holds, proving the existence of a bisector point at $x$ in \region{H_3} or \region{S_4}, respectively.

The case of $Y$-interval $(-1,0)$ for the cells \region{S_3} and \region{H_4} is similar: we have $w_p((-1,0))=(-\infty,0)$.

This completes the proof of Lemma \ref{cell-lem}.
\end{proof}

Now, let us discuss what bisector 
points are situated on the vertical lines at
$x \in \{-u,0,u\}$ 
and the horizontal lines at $y \in \{-1,0,1\}$ that separate the cells that were considered so far.

If $x = 0$, the bisector equation~\ref{mitabs} becomes
\[
   |y+1|^p = |y-1|^p,
\]
which is satisfied by the single value $y=0$, which proves that for any $p \neq 0$, the bisector $B_p(a,b)$ contains the origin (which also lies on $B_*(a,b)$), but no other point on the vertical axis. By an analogous calculation, the bisector contains no other point on the horizontal axis.

Now consider the horizontal line $y = -1$. For points on this line, 
and for $p>0$,
the bisector equation \ref{mitabs} becomes
\[
\left|\frac{x+u}{2}\right|^p - \left|\frac{x-u}{2}\right|^p = 1.
\]
For fixed $p \in (0,1)$, 
the left-hand side, as a function of $x$, 
is negative and decreasing for $x < -u$,
drops to $-u^p$ at $x = -u$,
rises to $u^p$ at $x = u$, 
and then decreases towards its asymptotic value $0$.
Thus, the line $y = -1$ always contains two points $(x_p,-1)$ and $(x'_p,-1)$ of $B_p(a,b)$, where $-u < x_p < u < x'_p$.
As $p$ tends to $0$, we find that $x_p$ and $x'_p$ both converge towards $u$. Indeed, as $p$ tends to 0, the maximum $u^p$ at $x = u$ of the left-hand side converges towards the value of the right-hand side, namely 1, whereas the value of the left-hand side converges towards $-1$ for $x = -u$ and towards 0 for any other $x$. Thus, as $p$ tends to 0 from above, the intersection of $B_p(a,b)$ with the line $y = -1$ converges to the point $(u,-1)$.

If $p$ tends to zero from below, then 
the line $y=-1$ does not contain any points of $B_p(a,b)$ except $(u,-1)$, because by substituting $y=-1$ in the bisector equation 
\[
L((x+u, y+1)) = (|x+u|^p + |y+1|^p)^{1/p} \ =\  (|x-u|^p + |y-1|^p)^{1/p} = L((x-u, y-1))
\]
the left-hand side becomes zero, 
by our continuation of $L_p$ to arguments of value zero, and the right-hand side $(|x-u|^p + 2^p)^{1/p}$
can only be zero for $x=u$.
Thus, the intersection of $B_p(a,b)$ with the line $y = -1$ converges to the point $(u,-1)$ as $p$ tends to zero from above or below. 

By symmetric arguments, the intersection of the $B_p(a,b)$ with the line $y = 1$ converges to the point $(-u,1)$ as $p$ tends to zero.

Now consider the vertical line $x = u$. 
For $p>0$, the bisector equation \ref{mitabs} becomes
\[
\left|\frac{y+1}{2u}\right|^p - \left|\frac{y-1}{2u}\right|^p = -1.
\]
The left-hand side, as a function of $y$, has its unique minimum $-1/u^p$ at $y = -1$, so (recall $u \geq 1$) the equation cannot be fulfilled and no points of $B_p(a,b)$ are on the line $x = u$ for any $p > 0$, unless $u = 1$ holds.%
\footnote{Note this means that in Figure~\ref{vorosketch-fig}, right, the two red faces on the left are actually a single face; only in the limit as $p$ drops to zero, will the boundaries of the two upper green faces really touch each other and separate the two red faces on the left from each other.}
If $u=1$ or $p<0$, then $(u,-1)$ is the only point 
of $B_p(a,b)$ on the line $x = u$.

By symmetric arguments, the intersection of $B_p(a,b)$ with the line $x = -u$ is empty for any $p > 0$ and $u > 1$, and contains only the point $(-u,1)$ for $p < 0$ or $u = 1$.

Thus, as $p$ approaches 0, the points of $B_p(a,b)$ that are
on the lines $x \in \{-u, 0, u\}$ and $y \in \{-1, 0, 1\}$ converge to $(-u,1)$, the origin, and $(u,-1)$, which all lie on the line $y = -x/u$ of $B_*(a,b)$. Together with Lemma~\ref{conv-theo}, this implies that $B_p(a,b)$ converges to exactly the complete line $y = -x/u$ and the complete hyperbola $y = -u/x$ that make up $B_*(a,b)$.

\bigskip
It remains to discuss the Voronoi regions $\lim_{p\rightarrow 0} V_p(a,b)$ and $\lim_{p\rightarrow 0} V_p(b,a)$. As the $L_p$ measures are continuous for any $p \neq 0$, the Voronoi regions must consist of entire faces of the subdivision of the plane that is induced by $B_p(a,b)$. To complete the proof of Theorem~\ref{main-theo}, we only need to show that, regardless of whether $p$ tends to zero from above or from below, $V_p(a,b)$ converges to the same selection of three faces of the subdivision induced by $B_*(a,b)$, namely the faces that constitute $V_*(a,b)$. Because the $L_p$ distances are continuous, it suffices to verify this on the basis of one point in the interior of each face.

In particular, consider the points $a = (-u,-1)$, $\gamma = (-u,2)$ and $\delta = (2u,-1)$. These points obviously belong to $V_*(a,b)$ and to $V_p(a,b)$ if $p < 0$, since their distance to $a$ is zero whereas their distances to $b$ are positive. When $p$ approaches zero from above, these points also belong to $V_p(a,b)$: the point $a$ trivially so; the point $\gamma$ because the bisector does not intersect the vertical line that contains $a$ and $\gamma$ (as we have seen above); and, finally, the point $\delta$ because:\[\begin{array}{rcll}
L_p(\delta - a) & < & L_p(\delta - b) & \Leftrightarrow \\
(3u)^p & < & u^p + 2^p & \Leftrightarrow \\
3^p - 1 & < & (2/u)^p
, & \\
\end{array}\]
which holds for all positive $p$ close enough to zero, as the left-hand side converges to 0 while the right-hand side converges to 1.

\smallskip
This completes the proof of Theorem \ref{main-theo}.

\end{document}